\crefname{equation}{}{}
\numberwithin{equation}{section}
\newcommand{\RR}{\mathbb{R}}
\newcommand{\CC}{\mathbb{C}}
\newcommand{\kk}{\Bbbk}
\newcommand{\St}{\mathrm{St}}
\newcommand{\Skew}{\mathrm{Skew}}
\newcommand{\Gr}{\mathrm{Gr}}
\newcommand{\Tang}{\mathrm{T}}
\newcommand{\deriv}{\mathrm{d}}
\newcommand{\diag}{\mathrm{diag}}
\newcommand{\rank}{\mathrm{rank}}
\newcommand{\vect}[1]{\mathbf{#1}}
\newtheorem{lemma}{Lemma}
\newtheorem{theorem}{Theorem}
\newtheorem{corollary}{Corollary}
\newtheorem{proposition}{Proposition}
\theoremstyle{definition}
\newtheorem{remark}{Remark}
\title[Condition of singular subspaces revisited]{The condition number of singular subspaces, revisited}
\author{Nick Vannieuwenhoven}
\thanks{KU Leuven, Department of Computer Science, Celestijnenlaan 200A, B-3001 Leuven, Belgium. \texttt{nick.vannieuwenhoven@kuleuven.be}\\
This research was partially supported by the Thematic Research Programme \emph{``Tensors: geometry, complexity and quantum entanglement,''} University of Warsaw, Excellence Initiative -- Research University and the Simons Foundation Award No. 663281 granted to the Institute of Mathematics of the Polish Academy of Sciences for the years 2021-2023.}
\subjclass[2020]{
15A12, 
15A23, 
65F35, 
53B20, 
65F22} 
\keywords{singular subspace; condition number; Grassmannian; chordal distance}
\begin{document}

\begin{abstract}
I revisit the condition number of computing left and right singular subspaces from [J.-G. Sun, \textit{Perturbation analysis of singular subspaces and deflating subspaces}, Numer. Math. 73(2), pp. 235--263, 1996]. For real and complex matrices, I present an alternative computation of this condition number in the Euclidean distance on the input space of matrices and the chordal, Grassmann, and Procrustes distances on the output Grassmannian manifold of linear subspaces.
Up to a small factor, this condition number equals the inverse minimum singular value gap between the singular values corresponding to the selected singular subspace and those not selected.
\end{abstract}

\maketitle 

\vspace{-10pt}
\section{Introduction}

All real or complex $m \times n$ matrices $A \in \kk^{m\times n}$ ($\kk=\RR$ or $\CC$) admit a \emph{full singular value decomposition} (SVD) \cite[Theorem 2.6.3]{HJ2013}:
\begin{linenomath}
\begin{align*}
 A = U \widehat{\Sigma} V^H, \quad 
 \widehat{\Sigma} = \begin{bmatrix} \Sigma & 0_{r\times (n-r)} \\ 0_{(m-r)\times r} & 0_{(m-r)\times(n-r)} \end{bmatrix},\quad 
 r = \rank(A),
\end{align*}
\end{linenomath} 
where $0_{p\times q}$ is a $p\times q$ zero matrix, $\Sigma$ is a real diagonal matrix $\Sigma = \diag(\sigma_1, \sigma_2, \ldots, \sigma_{r})$ that contains the nonnegative \emph{singular values} $\sigma_1 \ge \cdots \ge \sigma_r > 0$, $U \in \kk^{m \times m}$ and $V \in \kk^{n \times n}$ are unitary matrices ($U^H U = U U^H = I_m$ and $V^H V = V V^H = I_n$ equal an identity matrix), and $X^H$ denotes the conjugate transpose of $X$. 
The matrices $U$ and $V$ represent special linear subspaces of $\kk^m$ and $\kk^n$, respectively.
Indeed, denote the distinct nonnegative singular values by $\sigma_1' \ge \cdots \ge \sigma_s' > 0$ and let $d_i \ge 1$ be the multiplicity of $\sigma_i'$.
Then, we have unique orthogonal decompositions
\begin{linenomath}
\begin{align*}
 \kk^{m} 
 = \mathrm{U}_{\sigma_1'} \oplus \mathrm{U}_{\sigma_2'} \oplus \cdots \oplus \mathrm{U}_{\sigma_s'} \oplus \mathrm{U}_{0}, \quad\text{ and }\quad
 \kk^n 
 = \mathrm{V}_{\sigma_1'} \oplus \mathrm{V}_{\sigma_2'} \oplus \cdots \oplus \mathrm{V}_{\sigma_s'} \oplus \mathrm{V}_{0},
\end{align*}
\end{linenomath}
induced by the \textit{invariant subspaces} of the Hermitian matrices $A A^H$ and $A^H A$, respectively; see \cite[Chapter 2.6]{HJ2013}.
Herein, $\mathrm{U}_{\sigma_i'}$ (respectively, $\mathrm{V}_{\sigma_i'}$) denotes the $d_i$-dimensional \textit{left} (respectively, \textit{right}) \textit{singular subspace associated with $\sigma_i'$}, which is spanned by the columns of $U$ (respectively, $V$) that correspond to this singular value $\sigma_i'$. The last spaces in the decompositions, $\mathrm{U}_{0}$ and $\mathrm{V}_{0}$, of dimensions $m-r$ and $n-r$ are the \textit{cokernel} and \textit{kernel} of $A$, respectively.
More generally, we define the left and right singular subspace associated with the distinct singular values $S = \{ \sigma_{j_1}',\ldots,\sigma_{j_k}' \} \subset \{ \sigma_1', \ldots, \sigma_s', 0 \}$ as
\[
 \mathrm{U}_{S} := \mathrm{U}_{\sigma_{j_1}'} \oplus \cdots \oplus \mathrm{U}_{\sigma_{j_k}'} 
 \quad\text{and}\quad 
 \mathrm{V}_{S} := \mathrm{V}_{\sigma_{j_1}'} \oplus \cdots \oplus \mathrm{V}_{\sigma_{j_k}'}.
\]
Note that the right singular subspaces of $A$ are the left singular subspaces of $A^H$.

The sensitivity of singular subspaces is a classic topic in numerical analysis, discussed in monographs by Wilkinson \cite{Wilkinson1965}, Kato \cite{Kato1976}, and Stewart and Sun \cite{SS1990}.
One way to measure the sensitivity of a computational problem is through its condition number.
Rice \cite{Rice1966} gave a precise, general definition of the \emph{condition number} of a map between \emph{metric spaces} $f : X \to Y$, namely
\begin{linenomath}
\begin{align} \label{eqn_rice_cond}
 \kappa[f](x) := \lim_{\epsilon\to0} \sup_{\substack{x \in X,\\ d_X(x,x') \le \epsilon}} \frac{d_Y(f(x), f(x'))}{d_X(x,x')},
\end{align}
\end{linenomath}
where $d_X : X \times X \to \RR$ and $d_Y : Y \times Y \to \RR$ are distances on $X$ and $Y$, respectively. This definition implies the first-order sharp perturbation bound
\[
 d_Y( f(x'), f(x) ) \le \kappa[f](x) \cdot d_X(x',x) + o( d_X(x',x') ),
\]
which means that for all $x\in X$, there exist $x' \in X$ close to $x$, so that the inequality is an equality.
This is the essence of Rice's condition number \cite{Rice1966}, which distinguishes it from other condition-like quantities that provide only a data-dependent upper bound on $d_Y(f(x'),f(x))$ without satisfying first-order sharpness.

In pioneering work, Sun essentially computed a nice expression of Rice's condition number \cref{eqn_rice_cond} for singular subspaces in \cite[Equations (2.2.13) and (2.2.14)]{Sun1996}. 
Sun's \cite{Sun1996} approach uses the implicit function theorem to show that
for all curves $A(t) = A + t \dot{A} \subset \CC^{m\times n}$, there exists a real-analytic curve $B_{\dot{A}}(t) \subset \CC^{m\times k}$, $t\in\RR$, of \emph{arbitrary} bases that passes through a chosen \emph{orthonormal} basis $B$ of a selected $k$-dimensional singular subspace $\mathrm{U}_S$ of $A$ at $t=0$. 
Section 2.2 in \cite{Sun1996} then computes 
\begin{linenomath}
\begin{align*}
\kappa
= \lim_{\epsilon\to0}\; \sup_{\dot{A} \in \CC^{m\times n}} \frac{1}{\epsilon} \Vert B_{\dot{A}}(\epsilon) - B \Vert_F 
= \lim_{\epsilon\to0} \sup_{\dot{A} \in \CC^{m\times n}} \frac{\| \tan \boldsymbol{\theta}(\epsilon) \|}{\| A - A(\epsilon) \|_F},
\end{align*}
\end{linenomath}
where $\Vert\cdot\Vert_F$ is the Frobenius norm, and $\boldsymbol{\theta}(\epsilon)$ is the vector of principal angles between the column spans of $B$ and $B_{\dot{A}}(\epsilon)$, culminating in a closed expression for $\kappa$ in \cite[Equations (2.2.13) and (2.2.14)]{Sun1996}.
Sun considered it ``natural to regard [$\kappa$] as [a] condition number of singular subspaces'' \cite[p. 245]{Sun1996}. 
I suspect this phrasing was used by Sun because $\| \tan\boldsymbol{\theta} \|$ does not define a \emph{distance} on the space of linear subspaces, so it is not immediately obvious that the computed number coincides with Rice's definition from three decades earlier.
Nevertheless, a more precise statement is true: since $(1 + o(1)) \|\tan\boldsymbol{\theta}(\epsilon)\| =\|\sin\boldsymbol{\theta}(\epsilon)\|$ and the latter is the so-called \emph{chordal distance} on the space of linear subspaces \cite{Wong1967,YL2016,DD2013}, the computed $\kappa$ is a Rice condition number with respect to the Euclidean distance on the space of matrices and the chordal distance on the space of linear subspaces.

This paper presents an alternative, expository proof of the above condition number, both for complex and real matrices, by computing the spectral norm of a linear map between the vector space of $m\times n$ matrices and the space of linear subspaces, directly building on Rice's results \cite{Rice1966}. I believe this approach illustrates the elegance and utility of the geometric framework of condition \cite{BCSS1998,BC2013} in another classic linear algebra problem. The new computation also includes the case where the cokernel $\mathrm{U}_0$ is included in the selected left singular subspace, which was not covered in \cite{Sun1996}. I present additional results beyond \cite{Sun1996} that highlight the relations among the condition numbers of (i) left and right singular subspaces (\cref{prop_only_left}), (ii) the orthogonal complement space and the original space (\cref{prop_complementary}), and (iii) real and complex perturbations of real matrices (\cref{cor_field_extension}). A worst perturbation direction is explicitly stated as \cref{thm_secondary}.

\subsection*{Notation} 
For convenience, vectors are denoted by bold lowercase letters ($\vect{e}, \vect{u}$), matrices by uppercase letters ($A$, $U$, $V$), linear spaces by upright uppercase letters ($\mathrm{U}$), and nonlinear spaces by calligraphic uppercase letters ($\mathcal{U}, \mathcal{V}$). The cardinality of a finite set $S$ is denoted by $\sharp S$. $\kk$ refers to either the field of complex ($\CC$) or real ($\RR$) numbers. The transpose of $A\in\kk^{m\times n}$ is denoted by $A^T$ and the conjugate transpose by $A^H$. The Frobenius norm of $A$ is $\|A\|_F$, and the standard Euclidean norm of a vector $\vect{v}$ is denoted by $\|\vect{v}\|$. The vector of appropriate length that is zero everywhere except at position $i$ where it is $1$, is denoted by $\vect{e}_i$.

\subsection*{Acknowledgments}
I thank Nick Trefethen for the interesting discussions during his visit in Leuven and inquiring about the condition number of singular subspaces.

I thank two reviewers for their suggestions that led to several clarifications and to including (i) a more detailed treatment of left versus right singular subspaces, (ii) the case of complex matrices, and (iii) a worst perturbation.

Part of this research was conducted while I participated in the \emph{Algebraic Geometry with Applications to Tensors and Secants} (AGATES) thematic semester in Warsaw, Poland.

\section{Statement of the results} \label{sec_results}
Left and right subspaces of $A$ can be identified through a proper choice of indices
\begin{linenomath}
\begin{align} \label{eqn_selection}
\pi \subset \{ 1, 2, \ldots, m \}, 
\quad\text{respectively,}\quad
\rho \subset \{ 1, 2, \ldots, n \},
\end{align}
\end{linenomath}
where the cardinalities of $\pi$ and $\rho$ are respectively $\sharp \pi = k$ and $\sharp \rho = \ell$. Then, we consider the following \textit{linear subspaces}
\[
\mathrm{U}_\pi := \mathrm{span}( \vect{u}_{\pi_1}, \ldots, \vect{u}_{\pi_k} )
\quad\text{and}\quad
\mathrm{V}_\rho := \mathrm{span}( \vect{v}_{\rho_1}, \ldots, \vect{v}_{\rho_\ell} ),
\]
spanned by the columns of $U$ (respectively, $V$) in a full SVD of $A = U \Sigma V^H$ at the positions in $\pi$ (respectively, $\rho$). It is an immediate consequence of \cite[Theorem 2.6.5]{HJ2013} that $\mathrm{U}_\pi$ (respectively, $\mathrm{V}_\rho$) is well defined as a function of $A$ if and only if $\mathrm{U}_\pi$ (respectively, $\mathrm{V}_\rho$) is a left (respectively, right) singular subspace.
Therefore, I define for fixed $\pi$ and $\rho$ the following subsets:
\begin{linenomath}
\begin{align*}
\mathcal{U}_\pi &= \{ A \in \kk^{m\times n} \mid \mathrm{U}_\pi \text{ is a left singular subspace of } A \},\\
\mathcal{V}_\rho &= \{ A \in \kk^{m\times n} \mid \mathrm{V}_\rho \text{ is a right singular subspace of } A \}.
\end{align*}
\end{linenomath}
Essentially, this means that if $i \in \pi$ (respectively, $i\in\rho$) and the singular value $\sigma_i$ has a multiplicity higher than one, then all the left (respectively, right) singular vectors corresponding to it in $U$ (respectively, $V$) must be selected by $\pi$ (respectively, $\rho$); for the correct interpretation of this statement it is understood that $\sigma_{r+1}=\cdots=\sigma_{m}=0$ (respectively, $\sigma_{r+1}=\cdots=\sigma_n=0$), where $r$ is the rank of $A$.

For all matrices in $\mathcal{U}_\pi$, $\mathrm{U}_\pi$ is a left singular subspace \emph{of constant dimension equal to the cardinality of $\pi$}, i.e., $\dim \mathrm{U}_\pi = \sharp \pi$. The analogous statement holds for $\mathcal{V}_\rho$.
The set of all $d$-dimensional linear subspaces of $\kk^p$ forms a geometric object called the \textit{Grassmannian} $\Gr(d,\kk^p)$. This \textit{smooth manifold} \cite{Lee2013} can be realized, among other interpretations \cite{EAS1998,LLY2021}, as the embedded submanifold of rank-$d$ orthogonal projectors in $\kk^{p\times p}$ \cite{MS1985}:
\[
 \Gr(d, \kk^p) 
 = \{ Q Q^H \mid Q \in \kk^{p\times d},\, Q^H Q = I_d \} \subset \kk^{p\times p}.
\]

With the previous definitions in place, we now have well-defined maps 
\begin{linenomath}
\begin{align} \label{eqn_def_map}
\mathcal{L}_\pi^\kk : \mathcal{U}_\pi \to \Gr(k,\kk^m),\; A \mapsto \mathrm{U}_\pi, \quad\text{ and }\quad
\mathcal{R}_\rho^\kk : \mathcal{V}_\rho \to \Gr(\ell,\kk^n),\; A \mapsto \mathrm{V}_\rho,
\end{align}
\end{linenomath}  
which take a matrix and map it to the left and right singular subspace $\mathrm{U}_\pi$ and $\mathrm{V}_\rho$, respectively.
To compute the condition numbers of $\mathcal{L}_\pi^\kk$ and $\mathcal{R}_\rho^\kk$, we need distances on their domains and codomains.
I equip the domains with the Euclidean distance
\begin{equation}\label{eqn_eucl_metric}
d_{\kk^{m\times n}}(A,A') = \Vert A - A' \Vert_F = \sqrt{\sum_{i=1}^m \sum_{j=1}^n |a_{ij}-a_{ij}'|^2}
\end{equation}
from the ambient $\kk^{m\times n}$. For the Grassmannian $\Gr(d,\kk^p)$, several distances were considered in the literature \cite[section 12.3]{DD2013}. All unitarily-invariant distances on Grassmannians are functions of the \textit{principal angles} between $P=Q Q^H \in \Gr(d,\kk^p)$ and $P' = Q' Q'^{H} \in \Gr(d,\kk^p)$ \cite{Wong1967,YL2016}. These principal angles are
\[
 \theta_i = \cos^{-1}( \sigma_i(Q^H Q') ), \quad i=1, \ldots, d,
\] 
where $\sigma_i$ is the $i$th largest singular value of its argument. In terms of the vector of principal angles $\boldsymbol{\theta}=(\theta_1,\ldots,\theta_d)$, we will consider the following three distances:
\begin{linenomath}
\begin{align}
\nonumber d_{\Gr}^c(P,P') &:= \Vert \sin \boldsymbol{\theta} \Vert = \frac{1}{\sqrt{2}} \Vert P - P' \Vert_F, && \text{(chordal)},\\
\label{eqn_distance} d_{\Gr}^G(P,P') &:= \Vert \boldsymbol{\theta} \Vert,  && \text{(Grassmann)},\\
\nonumber d_{\Gr}^P(P,P') &:= \Vert 2 \sin (\boldsymbol{\theta}/2) \Vert = \min_{X,X' \in \mathcal{O}_k} \Vert Q X - Q' X' \Vert_F, && \text{(Procrustes)},
\end{align}
\end{linenomath}
where $\sin$ is applied elementwise and $\mathcal{O}_k$ is the unitary group of $k\times k$ unitary matrices.\footnote{The equalities are easily obtained by exploiting trigonometric identities, the fact that the Frobenius norm is induced by the Frobenius inner product, and that the trace is a function of the eigenvalues so $\mathrm{tr}(BAB^{-1})=\mathrm{tr}(A)$ for all valid $A,B$.} These distances are \emph{asymptotically} isometric \cite[section 4.3]{EAS1998}:
\[
 d_{\Gr}^c(P,P') = (1 + o(\|\boldsymbol{\theta}\|)) \cdot d_{\Gr}^G(P,P') = (1 + o(\|\boldsymbol{\theta}\|)) \cdot d_{\Gr}^P(P,P').
\]
It follows from \cref{eqn_rice_cond} that these distances can be used interchangeably without altering the condition number.

Before stating the main theorem, the following observation allows us to reduce to the case of left singular subspaces.

\begin{lemma}[Reduction to left singular subspaces]\label{prop_only_left}
If we select the chordal, Grassmann, and Procrustes distances from \cref{eqn_distance} on $\Gr(\ell,\kk^n)$ and $\Gr(k,\kk^m)$, and the Euclidean distance \cref{eqn_eucl_metric} on $\kk^{m\times n}$ and $\kk^{n\times m}$, then the following equality of condition numbers of left and right singular subspaces holds:
\[
\kappa[\mathcal{R}_\rho^\kk](A) = \kappa[\mathcal{L}_\rho^\kk](A^H).
\]
\end{lemma}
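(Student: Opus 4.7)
The plan is to observe that the two maps differ only by precomposition with Hermitian conjugation $\tau : \kk^{m\times n} \to \kk^{n\times m}$, $A \mapsto A^H$, which is an isometry with respect to the Euclidean distance \cref{eqn_eucl_metric}. The equality of Rice condition numbers will then follow immediately from the definition \cref{eqn_rice_cond}.

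First I would verify that the right singular subspaces of $A$ are the left singular subspaces of $A^H$ at the matching indices. From a full SVD $A = U\widehat{\Sigma}V^H$ one reads off the SVD $A^H = V\widehat{\Sigma}^H U^H$, so the columns of $V$ indexed by $\rho$ simultaneously span the right singular subspace $\mathrm{V}_\rho$ of $A$ and the left singular subspace of $A^H$ for the same index set. Consequently $A \in \mathcal{V}_\rho$ if and only if $A^H \in \mathcal{U}_\rho$, the latter being interpreted in the ambient space $\kk^{n\times m}$, and on these sets one has the pointwise identity $\mathcal{R}_\rho^\kk(A) = \mathcal{L}_\rho^\kk(A^H)$.

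Next, $\tau$ is a bijective isometry because $\|A - A'\|_F = \|A^H - (A')^H\|_F$ for all matrices. Substituting $A' = \tau^{-1}(B)$ into \cref{eqn_rice_cond} for $\mathcal{R}_\rho^\kk$ and invoking the identity from the previous step converts the supremum over an $\epsilon$-ball in $\mathcal{V}_\rho$ into a supremum over the image $\epsilon$-ball in $\mathcal{U}_\rho$, while the codomain distance is unchanged. The result is
\begin{linenomath}
\begin{align*}
\kappa[\mathcal{R}_\rho^\kk](A)
= \lim_{\epsilon \to 0} \sup_{\|A^H - B\|_F \le \epsilon} \frac{d_{\Gr}\bigl(\mathcal{L}_\rho^\kk(A^H),\, \mathcal{L}_\rho^\kk(B)\bigr)}{\|A^H - B\|_F}
= \kappa[\mathcal{L}_\rho^\kk](A^H),
\end{align*}
\end{linenomath}
where $d_{\Gr}$ denotes any of the three Grassmann distances in \cref{eqn_distance}.

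There is no serious obstacle here; the only point requiring care is checking that $\tau$ restricts to a bijection between $\mathcal{V}_\rho$ and $\mathcal{U}_\rho$, which is a direct consequence of the SVD observation in the first step. The utility of the lemma is that it allows the remainder of the paper to focus exclusively on the left singular subspace map $\mathcal{L}_\pi^\kk$ without loss of generality.
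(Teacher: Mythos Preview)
Your proof is correct and follows essentially the same approach as the paper: both identify $\mathcal{R}_\rho^\kk(A) = \mathcal{L}_\rho^\kk(A^H)$ from the SVD and then use that conjugate transposition is an isometry of the domain, so Rice's condition number is preserved. Your version simply unpacks the isometry-invariance step explicitly via the substitution $A' = \tau^{-1}(B)$, whereas the paper states it in one line.
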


Hence, it suffices to determine the condition number of the map $\mathcal{L}_\pi^\kk$ that takes a matrix to a left singular subspace.
This condition number is the first main result.

\begin{theorem}[Condition of left singular subspaces]\label{thm_main}
Let $A \in \kk^{m \times n}$ be any matrix over $\kk=\RR$ or $\CC$. Let $\pi$ be as in \cref{eqn_selection} and let $\pi^c := \{1,\ldots,m\}\setminus\pi$. Let
\(
\sigma_{r+1}=\cdots=\sigma_m=0,
\)
where $r$ is the rank of $A$. 
Then, the condition number of $\mathcal{L}_\pi^\kk$ from \cref{eqn_def_map}, which takes $A$ to its left singular subspace $\mathrm{U}_{\pi}$, is
\begin{linenomath}
\begin{align}\label{eqn_cond_expression}
 \kappa_\pi^\kk(A) 
 :=\begin{cases}
   \kappa[\mathcal{L}^\kk_\pi](A) & \text{if } A \in \mathcal{U}_\pi \\ 
   \infty & \text{if } A \not\in \mathcal{U}_\pi
  \end{cases}
= \max_{\substack{i\in\pi,\\ j\in\pi^c}}\; \frac{1}{\vert\sigma_i-\sigma_j\vert} \cdot \sqrt{\frac{\sigma_i^2 +\sigma_j^2}{(\sigma_i + \sigma_j)^2}}
\end{align}
\end{linenomath}
with respect to the Euclidean distance \cref{eqn_eucl_metric} on the domain and the chordal, Grassmann, and Procrustes distances from \cref{eqn_distance} on the codomain. If $\pi=\emptyset$ or $\pi^c=\emptyset$, then the right-hand side is understood to evaluate to zero.
\end{theorem}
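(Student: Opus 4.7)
The plan is to reduce to the diagonal SVD case via unitary invariance, linearize $\mathcal{L}_\pi^\kk$ in a neighborhood of a point $A \in \mathcal{U}_\pi$, and compute the operator norm of the resulting derivative. Both the Euclidean distance on $\kk^{m\times n}$ and the chordal distance on $\Gr(k,\kk^m)$ are invariant under the two-sided unitary action $A\mapsto U^H A V$ (with the induced action $P\mapsto U^H P U$ on projectors), and $\mathcal{L}_\pi^\kk$ is equivariant under it, so upon writing a full SVD $A = U\widehat{\Sigma} V^H$ I may assume $A = \widehat{\Sigma}$, $U = I_m$, $V = I_n$. The degenerate cases $\pi = \emptyset$ or $\pi^c = \emptyset$ make $\mathcal{L}_\pi^\kk$ constant, yielding $\kappa = 0$; and when $A \notin \mathcal{U}_\pi$ there is a pair $(i,j)$ with $i\in\pi$, $j\in\pi^c$ and $\sigma_i = \sigma_j$, so the right-hand side of \cref{eqn_cond_expression} is $+\infty$, matching the definition.

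Assume now $A = \widehat{\Sigma} \in \mathcal{U}_\pi$ with $\pi$ and $\pi^c$ both nonempty. Realizing $\Gr(k,\kk^m)$ as the set of rank-$k$ orthogonal projectors, $\mathcal{L}_\pi^\kk(A) = P_\pi(A) = \sum_{i\in\pi} \vect{u}_i \vect{u}_i^H$. The condition $A\in\mathcal{U}_\pi$ guarantees $\sigma_i \ne \sigma_j$ for every $i\in\pi$, $j\in\pi^c$, so $P_\pi$ is the spectral projector of $AA^H$ onto an isolated eigenvalue cluster and depends analytically on $A$ in a neighborhood. To compute its derivative, I differentiate $A(t) = U(t)\widehat{\Sigma}(t) V(t)^H$ at $t=0$ using the skew-Hermitian (or skew-symmetric, in the real case) constraints $\dot U + \dot U^H = 0$ and $\dot V + \dot V^H = 0$ coming from unitarity. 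Combining the resulting expressions for the $(i,j)$ and $(j,i)$ entries of $\dot A$ gives a $2\times 2$ linear system in $(\dot u_{ij}, \dot v_{ij})$ with determinant $\sigma_i^2 - \sigma_j^2$, whose solution is
\[
 \dot u_{ij} = \frac{-\sigma_j\,\dot a_{ij} - \sigma_i\,\overline{\dot a_{ji}}}{\sigma_i^2 - \sigma_j^2}, \qquad i\neq j,\; i,j \le \min(m,n),
\]
with the rectangular cokernel cases corresponding to $\sigma_i = 0$ or $\sigma_j = 0$ in the same expression. Expanding $\dot P_\pi = \sum_{i\in\pi} (\dot{\vect{u}}_i \vect{e}_i^T + \vect{e}_i \dot{\vect{u}}_i^H)$ and using the skew-symmetry of $\dot U$ shows that the $\pi\times\pi$ and $\pi^c\times\pi^c$ diagonal blocks of $\dot P_\pi$ vanish, while its cross blocks consist exactly of the entries $\dot u_{ij}$ with $i\in\pi$, $j\in\pi^c$, up to sign and conjugation.

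Using $d_{\Gr}^c(P,P')^2 = \tfrac{1}{2}\|P - P'\|_F^2$, the squared condition number equals
\[
 \kappa_\pi^\kk(A)^2 \;=\; \sup_{\|\dot A\|_F = 1} \tfrac{1}{2} \|\dot P_\pi\|_F^2 \;=\; \sup_{\|\dot A\|_F = 1} \sum_{i\in\pi,\,j\in\pi^c} |\dot u_{ij}|^2 .
\]
The matrix coordinates of $\dot A$ split into unordered index pairs $\{(i,j),(j,i)\}$ and diagonal singletons; only the pairs with one index in $\pi$ and the other in $\pi^c$ contribute to the sum, and different pairs involve disjoint coordinates, so the optimization decouples. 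The maximum is therefore attained by concentrating the entire unit-norm budget of $\dot A$ on a single worst pair, for which Cauchy--Schwarz applied to the numerator $-\sigma_j \dot a_{ij} - \sigma_i \overline{\dot a_{ji}}$ subject to $|\dot a_{ij}|^2 + |\dot a_{ji}|^2 = 1$ yields the sharp value $(\sigma_i^2 + \sigma_j^2)/(\sigma_i^2 - \sigma_j^2)^2$; this rearranges to the formula in \cref{eqn_cond_expression}, and the computation is identical over $\RR$ and $\CC$. The Grassmann and Procrustes variants follow from the asymptotic isometry recorded in the excerpt. The main technical nuisance is providing a uniform treatment of the rectangular and cokernel cases and confirming that the worst direction predicted by Cauchy--Schwarz is realized by a genuine matrix perturbation rather than an artefact of the SVD parametrization; both are handled by the pair-coordinate decomposition above.
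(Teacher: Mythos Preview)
Your approach is correct and genuinely different from the paper's. The paper works intrinsically with the compact SVD map $\Pi:\St_{m,r}\times\mathrm{D}^r\times\St_{n,r}\to\RR^{m\times n}$, proves it is a local diffeomorphism at generic points, pulls back the norm constraint to the tangent spaces of the Stiefel manifolds, and then runs a fairly lengthy variable-elimination argument to reduce the resulting constrained optimization to a diagonal spectral-norm computation; the complex case is handled separately via the restriction-of-scalars embedding $\jmath:\CC^{m\times n}\to\RR^{2m\times 2n}$, and nongeneric matrices are reached only at the end by a density-and-continuity argument. By contrast you work entry-wise in the full SVD at the diagonal base point, obtain each cross-block entry $\dot u_{ij}$ from an explicit $2\times2$ system in $(\dot a_{ij},\dot a_{ji})$, observe that distinct pairs decouple, and finish with Cauchy--Schwarz. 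This is considerably shorter, treats $\RR$ and $\CC$ uniformly, and makes the coupling structure of the first-order perturbation transparent; the paper's route, on the other hand, stays closer to the Riemannian-geometry framework of condition and yields the worst perturbation direction (its \cref{thm_secondary}) as a by-product of the same analysis.

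One point you should tighten: you invoke analyticity of the spectral projector $P_\pi$ via Kato, but then compute its derivative by differentiating a full SVD curve $A(t)=U(t)\widehat{\Sigma}(t)V(t)^H$. When $A$ has repeated singular values inside $\pi$ or inside $\pi^c$ (which is allowed for $A\in\mathcal{U}_\pi$), the differential of the SVD parametrization need not be surjective onto $\kk^{m\times n}$, so not every $\dot A$ arises from some $(\dot U,\dot\Sigma,\dot V)$, and the passage ``solve the $2\times2$ system to get $\dot u_{ij}$'' is not a priori a computation of $dP_\pi(\dot A)$. Your formula is nevertheless correct: either restrict first to matrices with distinct singular values (where your derivation is rigorous) and extend by continuity of both sides, exactly as the paper does, or bypass the SVD curve entirely and read off $(\dot P_\pi)_{pq}$ directly from Kato's resolvent formula $\dot P_\pi=\frac{1}{2\pi i}\oint_\Gamma (zI-AA^H)^{-1}(\dot A A^H+A\dot A^H)(zI-AA^H)^{-1}\,dz$, which at the diagonal base point reproduces your $2\times2$ expression for every pair with $\sigma_i\ne\sigma_j$ without any appeal to a smooth SVD.
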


\begin{remark}\label{rem_bounded}
The condition number $\kappa_\pi^\kk$ is the maximum \emph{inverse singular value gap} between the singular values that are selected by $\pi$ and the ones that are not selected (including $\sigma_{r+1}=\cdots=\sigma_m=0$), multiplied by a scaling factor that satisfies
\[
 \frac{1}{\sqrt{2}} \le \sqrt{\frac{\sigma_i^2 +\sigma_j^2}{(\sigma_i + \sigma_j)^2}} \le 1;
\]
hence, it is rather unimportant.
\end{remark}

The theorem entails a few interesting facts that are highlighted next. First, it implies a sharp error bound for infinitesimal errors.

\begin{corollary}[Error bound]
We have the following first-order sharp error bound
\begin{linenomath}
\begin{equation*}
d_\Gr^\star\bigl( \mathcal{L}_\pi^\kk(A), \mathcal{L}_\pi^\kk(A') \bigr) \le \kappa[\mathcal{L}_\pi^\kk](A) \cdot \| A - A' \|_F \cdot (1 + o(1)),
\end{equation*}
\end{linenomath}
where $\star \in \{c, G, P\}$ is the chordal, Grassmann, or Procrustes distance from \cref{eqn_distance}.
\end{corollary}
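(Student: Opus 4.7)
The plan is that this corollary is essentially a restatement of what Rice's condition number \cref{eqn_rice_cond} already gives, once \cref{thm_main} has been established; no new calculation is required, so the work lies entirely in carefully unpacking the $\limsup$ in \cref{eqn_rice_cond} and then bookkeeping the asymptotic isometry of the three Grassmannian distances.

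First I would handle the chordal case $\star = c$. By \cref{thm_main}, $\kappa[\mathcal{L}_\pi^\kk](A)$ equals the right-hand side of \cref{eqn_cond_expression} with respect to the Frobenius norm on the domain and $d_\Gr^c$ on the codomain. Unfolding the $\limsup$ in \cref{eqn_rice_cond}, for every $\varepsilon > 0$ there is a neighborhood of $A$ in $\mathcal{U}_\pi$ on which
\[
d_\Gr^c\bigl(\mathcal{L}_\pi^\kk(A), \mathcal{L}_\pi^\kk(A')\bigr) \le \bigl(\kappa[\mathcal{L}_\pi^\kk](A) + \varepsilon\bigr)\,\|A-A'\|_F.
\]
Rewriting this as a $(1+o(1))$-estimate as $A' \to A$ yields the chordal version of the bound. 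For $\star \in \{G, P\}$, I would invoke the asymptotic isometry stated just after \cref{eqn_distance}, namely $d_\Gr^c = (1+o(\|\boldsymbol{\theta}\|))\,d_\Gr^G = (1+o(\|\boldsymbol{\theta}\|))\,d_\Gr^P$. Because $\kappa[\mathcal{L}_\pi^\kk](A) < \infty$ forces continuity of $\mathcal{L}_\pi^\kk$ at $A$, the principal angles satisfy $\|\boldsymbol{\theta}\| \to 0$ as $A' \to A$, so the factor $(1+o(\|\boldsymbol{\theta}\|))$ is absorbed into $(1+o(1))$ without altering the leading constant. This is precisely the reason \cref{thm_main} assigns the same numerical value of $\kappa$ to all three distances.

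First-order sharpness is built into Rice's definition itself: the supremum in \cref{eqn_rice_cond} is attained in the limit along a worst-case perturbation direction (explicitly identified in \cref{thm_secondary}), so the displayed inequality becomes an equality to leading order along such a sequence $A' \to A$. The only potential subtlety I see is verifying that the perturbed matrix stays in $\mathcal{U}_\pi$ locally, but this follows from the singular value gap condition being an open condition whenever $\kappa[\mathcal{L}_\pi^\kk](A) < \infty$, and I do not anticipate any real obstacle in turning this sketch into a rigorous argument.
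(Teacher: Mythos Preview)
Your proposal is correct and matches the paper's approach: the paper states this corollary without proof, treating it as an immediate consequence of \cref{thm_main} together with the general first-order bound $d_Y(f(x'),f(x)) \le \kappa[f](x)\cdot d_X(x',x) + o(d_X(x',x))$ already noted in the introduction as inherent to Rice's definition \cref{eqn_rice_cond}. Your unpacking of the $\limsup$, the asymptotic isometry of the three distances, and the appeal to \cref{thm_secondary} for sharpness are exactly the justifications the paper leaves implicit.
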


Second, the formula for $\kappa_\pi^\kk$ is symmetric in the roles of $\pi$ and $\pi^c$. This observation is no coincidence and will be exploited to simplify the proof of \cref{thm_main}.

\begin{lemma}[Condition of the complementary subspace] \label{prop_complementary}
We have 
\[
 \kappa[\mathcal{L}_\pi^\kk](A) = \kappa[\mathcal{L}_{\pi^c}^\kk](A).
\]
That is, the condition number of the orthogonal complement $\mathrm{U}_\pi^\perp$ of a selected left singular subspace $\mathrm{U}_\pi$ is the same as the condition number of $\mathrm{U}_\pi$ itself.
\end{lemma}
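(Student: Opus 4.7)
The plan is to factor $\mathcal{L}_{\pi^c}^\kk$ through $\mathcal{L}_\pi^\kk$ via the orthogonal-complement map on Grassmannians, and to check that this complement map is an isometry for each of the three distances in \cref{eqn_distance}. Rice's condition number \cref{eqn_rice_cond} is invariant under post-composition with an isometry of the codomain, so the claim will then follow at once.

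First, I would observe that the two domains coincide, $\mathcal{U}_\pi = \mathcal{U}_{\pi^c}$: the requirement that $\pi$ select complete groups of left singular vectors sharing a common singular value is manifestly symmetric in $\pi$ and $\pi^c$, once one adopts the convention $\sigma_{r+1}=\cdots=\sigma_m=0$ from \cref{thm_main} for the cokernel indices. On this common domain, orthonormality of the columns of $U$ in any SVD of $A$ yields $\mathrm{U}_{\pi^c} = \mathrm{U}_\pi^\perp$, whence
\[
\mathcal{L}_{\pi^c}^\kk = \Phi \circ \mathcal{L}_\pi^\kk, \qquad \Phi \colon \Gr(k,\kk^m) \to \Gr(m-k,\kk^m),\; P \mapsto I - P.
\]

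Next I would check that $\Phi$ is an isometry in each of the three metrics of \cref{eqn_distance}. The chordal case is immediate from $\|P - P'\|_F = \|(I-P)-(I-P')\|_F$. The Grassmann and Procrustes cases depend only on the vector $\boldsymbol{\theta}$ of principal angles, so I would invoke the classical identity that orthogonal complementation preserves the multiset of nonzero principal angles between two equidimensional subspaces and only appends zero entries to $\boldsymbol{\theta}$ when $k \neq m-k$; since $\sin 0 = 0$, both $\|\boldsymbol{\theta}\|$ and $\|2\sin(\boldsymbol{\theta}/2)\|$ are unchanged by $\Phi$.

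Finally, substituting $\Phi \circ \mathcal{L}_\pi^\kk$ for $f$ in \cref{eqn_rice_cond} and using the isometry of $\Phi$ to cancel it from the numerator leaves exactly the limit defining $\kappa[\mathcal{L}_\pi^\kk](A)$. The only genuinely delicate step is the principal-angle invariance under orthogonal complementation between Grassmannians of the complementary ambient dimensions $k$ and $m-k$, but this is a classical fact for which the accompanying chordal identity already supplies a one-line proof via the projector realisation, making the rest a direct application of Rice's definition.
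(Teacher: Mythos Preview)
Your proposal is correct and follows essentially the same route as the paper: factor $\mathcal{L}_{\pi^c}^\kk$ through the complement map $P\mapsto I-P$, observe this is an isometry (the paper checks only the chordal distance, relying on the asymptotic equivalence of the three metrics stated before \cref{eqn_distance}), and conclude by invariance of \cref{eqn_rice_cond} under isometries of the codomain. Your added remark that $\mathcal{U}_\pi=\mathcal{U}_{\pi^c}$ and your verification of all three metrics are harmless extras; note, though, that the chordal identity $\|P-P'\|_F=\|(I-P)-(I-P')\|_F$ by itself only shows $\|\sin\boldsymbol\theta\|$ is preserved, not the full principal-angle vector, so for the Grassmann and Procrustes cases you are genuinely relying on the classical fact rather than deriving it from the chordal case.
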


Third, the condition number $\kappa_\pi^\kk$ is a continuous function of $A$ on the extended real line.
We state this implication as the next result.

\begin{corollary}[Continuity]
The condition number $\kappa_\pi^\kk : \kk^{m\times n} \to \RR\cup\{\infty\}$ is a continuous map. 
In particular, 
\[
\lim_{A \to \partial\mathcal{U}_\pi} \kappa[\mathcal{L}_\pi^\kk](A) = \infty,
\]
where $\partial \mathcal{U}_\pi$ is the boundary of $\mathcal{U}_\pi$ and $\mathcal{U}_\pi \cup \partial \mathcal{U}_\pi = \kk^{m \times n}$.
\end{corollary}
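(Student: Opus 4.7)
The plan rests on two ingredients: the Lipschitz continuity of the ordered singular values under matrix perturbation (Mirsky's inequality), and the lower bound
\(
\sqrt{(\sigma_i^2+\sigma_j^2)/(\sigma_i+\sigma_j)^2}\ge 1/\sqrt{2}
\)
valid whenever $\sigma_i+\sigma_j>0$, as recorded in \cref{rem_bounded}. With these in hand, the corollary splits into three clean pieces: openness of $\mathcal{U}_\pi$ together with continuity on it, the boundary blow-up, and density of $\mathcal{U}_\pi$.

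The first step is to observe that $\mathcal{U}_\pi$ admits the characterization
\[
\mathcal{U}_\pi \;=\; \bigl\{ A \in \kk^{m\times n} : \sigma_i(A) \neq \sigma_j(A) \text{ for every } i\in\pi,\; j\in\pi^c \bigr\},
\]
with the convention $\sigma_k(A)=0$ for $k>\min(m,n)$. Mirsky's inequality makes each $\sigma_i$ Lipschitz continuous on $\kk^{m\times n}$, so this characterization immediately shows that $\mathcal{U}_\pi$ is open and that the right-hand side of \cref{eqn_cond_expression}, being a maximum of finitely many continuous functions with strictly positive denominators, is continuous on $\mathcal{U}_\pi$.

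The second step handles the boundary, giving both the continuity at $\kappa_\pi^\kk=\infty$ and the ``in particular'' limit. For $A_0\in\partial\mathcal{U}_\pi$ I would select indices $i^\star\in\pi,\;j^\star\in\pi^c$ with $\sigma_{i^\star}(A_0)=\sigma_{j^\star}(A_0)$ and, for any sequence $A_n\to A_0$ with $A_n\in\mathcal{U}_\pi$, apply \cref{rem_bounded} to obtain
\begin{linenomath}
\begin{align*}
\kappa_\pi^\kk(A_n) \;\ge\; \frac{1}{\sqrt{2}\,|\sigma_{i^\star}(A_n)-\sigma_{j^\star}(A_n)|} \;\longrightarrow\; \infty
\end{align*}
\end{linenomath}
by Mirsky's inequality; for indices $n$ with $A_n\notin\mathcal{U}_\pi$ the value is already $\infty$ by definition. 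Combining both cases yields $\kappa_\pi^\kk(A_n)\to\infty=\kappa_\pi^\kk(A_0)$ in $\RR\cup\{\infty\}$.

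Finally, for the identity $\mathcal{U}_\pi\cup\partial\mathcal{U}_\pi=\kk^{m\times n}$ I would invoke density of $\mathcal{U}_\pi$: its complement is the finite union $\bigcup_{i\in\pi,\;j\in\pi^c}\{A:\sigma_i(A)=\sigma_j(A)\}$, each piece of which is a proper real-algebraic subset with empty interior whenever $\mathcal{U}_\pi$ is non-empty. Since $\mathcal{U}_\pi$ is open, its closure coincides with $\mathcal{U}_\pi\cup\partial\mathcal{U}_\pi$, which then equals the whole ambient space. The only delicate point I anticipate is the edge case $\sigma_{i^\star}(A_0)=\sigma_{j^\star}(A_0)=0$, where the scaling factor in \cref{eqn_cond_expression} is formally $0/0$; but this is sidestepped by \cref{rem_bounded}, which applies along the whole approximating sequence because $A_n\in\mathcal{U}_\pi$ forces $\sigma_{i^\star}(A_n)\neq\sigma_{j^\star}(A_n)$ and hence $\sigma_{i^\star}(A_n)+\sigma_{j^\star}(A_n)>0$, so one never needs to evaluate the scaling factor at the singular limit.
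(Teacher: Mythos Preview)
Your proof is correct in structure and matches the approach the paper takes implicitly (the paper states this corollary without proof, regarding it as immediate from the explicit formula \cref{eqn_cond_expression}; the ingredients you use---continuity of singular values, \cref{rem_bounded}, and density---are exactly what appear scattered through \cref{sec_sub_thm1_equal}).

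There is one imprecision worth flagging. You assert that each set $\{A:\sigma_i(A)=\sigma_j(A)\}$ is a ``proper real-algebraic subset.'' This is not true in general: ordered singular values are only semi-algebraic functions, and for instance the locus $\{\sigma_1=\sigma_2\}$ in $3\times3$ matrices is not Zariski closed (its Zariski closure is the full discriminant locus $\{\text{some }\sigma_i=\sigma_j\}$, which also contains matrices with $\sigma_2=\sigma_3<\sigma_1$). What is true, and sufficient, is that each proper piece is \emph{contained in} a proper algebraic set: if $i,j\le\min(m,n)$ then $\{\sigma_i=\sigma_j\}\subset\{\mathrm{disc}(\chi_{A^H A})=0\}$, and if one index exceeds $\min(m,n)$ then the piece lies in a determinantal locus. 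Either way the piece has empty interior. Alternatively, and more in line with the paper's \cref{lem_zariski}, you can bypass the pieces entirely: the set $\mathcal{U}$ of full-rank matrices with distinct singular values is Zariski open and dense, and $\mathcal{U}\subset\mathcal{U}_\pi$ whenever $\mathcal{U}_\pi\neq\emptyset$, which immediately gives density of $\mathcal{U}_\pi$.
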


Fourth, \cref{thm_main} is a statement about three condition numbers. For complex matrices, there is one associated condition number, namely $\kappa[\mathcal{L}_\pi^\CC]$. However, for a real matrix $A\in\RR^{m\times n}$, we can look at two different condition numbers, namely $\kappa[\mathcal{L}_\pi^\RR](A)$ and $\kappa[\mathcal{L}_\pi^\CC](A)$. Both measure the the sensitivity of the left singular subspace of $A$, but differ in the set of allowed perturbations. The former allows arbitrary \textit{real} perturbations of $A$ in $\RR^{m\times n}$, while the latter allows arbitrary \textit{complex} perturbations of $A$ in $\CC^{m\times n}$. Since the set of perturbations is larger, a priori we have $\kappa[\mathcal{L}_\pi^\RR](A) \le \kappa[\mathcal{L}_\pi^\CC](A)$.
The following result follows immediately from the formula of the condition number in \cref{thm_main}.

\begin{corollary}[Invariance under field extension]\label{cor_field_extension}
 Let $A \in \RR^{m\times n}$ be a real matrix. Then, allowing complex perturbations of $A$ does not change the condition number of its associated singular subspace:
 \[
  \kappa[\mathcal{L}_\pi^\RR](A) = \kappa[\mathcal{L}_\pi^\CC](A).
 \]
\end{corollary}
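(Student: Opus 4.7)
The plan is to invoke \cref{thm_main} twice, once for $\kk = \RR$ and once for $\kk = \CC$, applied to the same real matrix $A \in \RR^{m \times n} \subset \CC^{m \times n}$, and observe that both evaluations produce the same number. The only thing to verify is that every quantity appearing in the formula \cref{eqn_cond_expression} is intrinsic to $A$ as a linear map and does not depend on whether we regard its entries as elements of $\RR$ or of $\CC$.

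First I would record that the singular values $\sigma_1 \ge \sigma_2 \ge \cdots \ge \sigma_{\min(m,n)} \ge 0$ of a real matrix $A$ are the nonnegative square roots of the eigenvalues of the real symmetric matrix $A A^T = A A^H$, and hence coincide with the singular values of $A$ viewed as an element of $\CC^{m \times n}$. In particular, the multiplicity pattern of the distinct singular values $\sigma_1' > \cdots > \sigma_s' \ge 0$ is the same in both settings. Next I would note that the domain condition "$A \in \mathcal{U}_\pi$," which requires $\pi$ to select complete blocks of indices corresponding to a common singular value (with the convention $\sigma_{r+1} = \cdots = \sigma_m = 0$), depends only on the multiplicity pattern of the singular values; hence $A \in \mathcal{U}_\pi$ as a real matrix if and only if $A \in \mathcal{U}_\pi$ as a complex matrix, and the infinite branch of \cref{eqn_cond_expression} is triggered in exactly the same cases.

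Assuming $A \in \mathcal{U}_\pi$, \cref{thm_main} gives
\[
  \kappa[\mathcal{L}_\pi^\RR](A) \;=\; \max_{\substack{i\in\pi,\\ j\in\pi^c}} \frac{1}{|\sigma_i - \sigma_j|}\sqrt{\frac{\sigma_i^2 + \sigma_j^2}{(\sigma_i+\sigma_j)^2}} \;=\; \kappa[\mathcal{L}_\pi^\CC](A),
\]
since the right-hand side involves only the (field-independent) singular values of $A$ and the (field-independent) index set $\pi$. The edge cases $\pi = \emptyset$ and $\pi^c = \emptyset$ are handled by the convention in \cref{thm_main}, which assigns the value zero in both fields. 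This completes the argument.

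There is no real obstacle in this proof; the entire content is that the closed-form expression of \cref{thm_main} is a function of the singular values of $A$ alone, with no explicit dependence on the base field. The only subtlety worth flagging is the \emph{a priori} inequality $\kappa[\mathcal{L}_\pi^\RR](A) \le \kappa[\mathcal{L}_\pi^\CC](A)$ mentioned in the paragraph preceding the corollary: one might have expected the richer set of complex perturbations to produce a strictly larger supremum in Rice's definition \cref{eqn_rice_cond}, and the corollary asserts that this potential strict inequality never occurs. The present proof confirms this a posteriori, via the explicit formula, rather than by exhibiting a worst real perturbation that also dominates all complex perturbations (an alternative, more constructive route would use the explicit worst perturbation from \cref{thm_secondary}).
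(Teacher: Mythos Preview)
Your proposal is correct and matches the paper's own treatment: the corollary is stated as following immediately from the formula of \cref{thm_main}, and you have simply spelled out why the right-hand side of \cref{eqn_cond_expression} depends only on the singular values and the index set $\pi$, both of which are field-independent. The additional care you take in verifying that the membership condition $A \in \mathcal{U}_\pi$ and the edge cases $\pi=\emptyset$, $\pi^c=\emptyset$ are also field-independent is appropriate and not explicitly written out in the paper.
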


Finally, relative condition numbers are readily obtained from \cref{thm_main} by an appropriate scaling of $\kappa_\pi^\kk(A)$. As we are measuring perturbations of $A$ in the Frobenius norm, scaling by $\Vert A\Vert_F$ seems most appropriate. Hence,
 \[
  \mu_\pi^{\kk}(A) := \kappa_\pi(A) \Vert A \Vert_F
 \]
 would be a natural relative condition number.
 The distances between subspaces in \cref{eqn_distance} already have a relative quality to them, being bounded respectively by $\sqrt{k}$, $\frac{\pi}{2}\sqrt{k}$, and $\sqrt{2k}$, where $k=\sharp\pi$. One could additionally divide $\mu_{\pi}^\kk$ by these maxima.

The following secondary result will also be established.

\begin{proposition}[Worst perturbation direction] \label{thm_secondary}
Let $A \in \kk^{m\times n}$ for $\kk=\RR$ or $\CC$ be of rank $r$. Let $A=U\Sigma V^H$ be a full SVD of $A$ with $\sigma_1 \ge \cdots \ge \sigma_r > \sigma_{r+1} = \cdots = \sigma_m = 0$.
Let $i\in\pi$ and $j\in\pi^c$ be any indices that realize the condition number in \cref{thm_main}.
Then, the perturbation
\[
\dot{A} = U (\vect{e}_j \vect{e}_i^T - \vect{e}_i \vect{e}_j^T)\Sigma V^H + \begin{cases}
2\frac{\sigma_i \sigma_j}{\sigma_i^2 + \sigma_j^2} U \Sigma (\vect{e}_i \vect{e}_j^T - \vect{e}_j \vect{e}_i^T) V^H,  & \text{if } 1 \le i,j \le r, \\
0, & \text{otherwise},
\end{cases}
\]
infinitesimally attains the condition number as $t\to0$:
 \[
d_{\Gr}^\star\bigl( \mathcal{L}_\pi^\kk(A), \mathcal{L}_\pi^\kk(A+t\dot{A}) \bigr) = \kappa[\mathcal{L}_\pi^\kk](A) \cdot \|t \dot{A}\|_F \cdot (1 + o(1))
 \]
 where $\star\in\{c,G,P\}$ is the chordal, Grassmann, or Procrustes distance from \cref{eqn_distance}.
\end{proposition}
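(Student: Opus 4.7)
The strategy is to verify directly that the proposed $\dot{A}$ saturates the supremum in Rice's definition \cref{eqn_rice_cond}. The proof of \cref{thm_main} (which I may invoke) realizes $\kappa[\mathcal{L}_\pi^\kk](A)$ as the operator norm of the derivative $\deriv \mathcal{L}_\pi^\kk(A)$, decomposed into blocks indexed by pairs $(i,j) \in \pi \times \pi^c$; the $(i,j)$-block has spectral norm equal to the argument of the max in \cref{eqn_cond_expression}. The proposition then amounts to exhibiting a pre-image in the block of maximal norm whose image has unit chordal norm.

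Since the Frobenius norm and each Grassmann distance from \cref{eqn_distance} are bi-unitarily invariant and $\mathcal{L}_\pi^\kk$ is equivariant under such an action, I would first reduce to the case $A = \Sigma$ by replacing $\dot{A}$ by $U^H \dot{A} V$. The reduced perturbation is supported at the two entries $(i,j)$ and $(j,i)$, and a short direct expansion yields $\|\dot{A}\|_F^2 = (\sigma_i-\sigma_j)^2(\sigma_i+\sigma_j)^2/(\sigma_i^2+\sigma_j^2)$ when $1 \le i,j \le r$ and $\|\dot{A}\|_F = \sigma_i$ in the degenerate case $j > r$ (the case $i > r$ is analogous).

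Next, I would apply first-order SVD perturbation theory to $\Sigma + t\dot{A}$: up to $O(t^2)$ its left singular vectors are rotated by a unitary $I + tX$, where the off-diagonal entries of the skew-Hermitian $X$ are determined by the classical formula
\[
X_{ij} = \frac{\sigma_j \dot{A}_{ij} + \sigma_i \overline{\dot{A}_{ji}}}{\sigma_j^2 - \sigma_i^2}.
\]
Under the projector embedding of $\Gr(k,\kk^m)$, the chordal tangent norm of $\deriv \mathcal{L}_\pi^\kk(A)[\dot{A}]$ equals $\|X_{\pi^c,\pi}\|_F$, which for our $\dot{A}$ collapses to $|X_{ij}|$. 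Substituting the explicit values of $\dot{A}_{ij}, \dot{A}_{ji}$ gives $|X_{ij}| = 1$ (the factors involving $\sigma_i^2 - \sigma_j^2$ cancel), so the ratio $\|X_{\pi^c,\pi}\|_F / \|\dot{A}\|_F$ equals exactly the maximum in \cref{eqn_cond_expression}. The asymptotic isometry noted after \cref{eqn_distance} then promotes this infinitesimal equality to each of $d_\Gr^\star$ with $\star \in \{c,G,P\}$.

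The main obstacle is the degenerate regime $j > r$ (or symmetrically $i > r$): the perturbation formula for $X_{ij}$ is not directly valid when $\sigma_j = 0$, and one must instead analyze the eigenvector perturbation of $A A^H$. The vanishing of the second summand in the definition of $\dot{A}$ is precisely what makes the two analyses agree: $\dot{A} A^H + A \dot{A}^H$ couples the eigenvectors $\vect{e}_i$ and $\vect{e}_j$ of $A A^H$ through the off-diagonal entry $\sigma_i^2$, whose quotient with the eigengap $\sigma_i^2$ is unity, again yielding the ratio $1/\sigma_i$ that matches \cref{eqn_cond_expression}.
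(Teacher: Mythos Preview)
Your proposal is correct and computes the same two first-order quantities as the paper (the chordal displacement of the projector and $\|\dot A\|_F$), but the execution runs in the opposite direction. You start from the given $\dot{A}$ and invoke the classical SVD perturbation formula $X_{ij} = (\sigma_j\dot{A}_{ij}+\sigma_i\overline{\dot{A}_{ji}})/(\sigma_j^2-\sigma_i^2)$ to recover the skew-Hermitian generator of $\dot U$, then read off $\|X_{\pi^c,\pi}\|_F=1$. The paper instead \emph{prescribes} the tangent vectors $\dot{U}=U(\vect{e}_j\vect{e}_i^T-\vect{e}_i\vect{e}_j^T)$ and $\dot V$ on the unitary groups, builds the curve $A_t=U_t\Sigma V_t^H$ whose SVD is known by construction, and computes $d_{\Gr}^c(USU^H,U_tSU_t^H)=t+o(t)$ and $\|A_t-A\|_F$ directly, finally checking that the tangent of $A_t$ is the stated $\dot A$. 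The paper's forward construction buys a uniform treatment of the degenerate case $j>r$: since $U_t$ is explicit, the chordal computation never sees a $0/0$ and no separate eigenvector analysis of $AA^H$ is needed; the case split appears only in the trivial evaluation of $\|\dot A\|_F$. Your route has the complementary advantage of being purely mechanical once the perturbation formula is granted, and it makes transparent why the coefficient $2\sigma_i\sigma_j/(\sigma_i^2+\sigma_j^2)$ in the second summand is the right one (it is exactly the least-squares choice of $\dot E'$ from \cref{sec_sub_eliminating}).
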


\section{Illustration in an example}\label{sec_example}
Let us consider an example to illustrate what \cref{thm_main} does and does not state. Since the result does not depend on the singular vectors themselves, we can consider pseudodiagonal matrices without loss of generality. Consider the $6 \times 5$ real matrix
\[
 A = 
\begin{bmatrix} 
\Sigma \\
0_{1\times5}
\end{bmatrix}, \;\text{where}\quad \Sigma=\diag(4,2,1,0.99,0).
\]
Hence, $\sigma_1=4$, $\sigma_2=2$, $\sigma_3=1$, $\sigma_4=0.99$, and $\sigma_5=\sigma_6=0$.
When selecting just one singular value, so $\sharp\pi=1$, we find
\begin{linenomath}
\begin{align*}
 \kappa_{\{1\}}(A) &= 2^{-1} \sqrt{\frac{4^2+2^2}{6^2}} \approx 0.37, &&& 
 \kappa_{\{2\}}(A) &= 1^{-1} \sqrt{\frac{2^2+1^2}{3^2}} \approx 0.75, \\
 \kappa_{\{3\}}(A) &= 100 \sqrt{\frac{1^2 + .99^2}{1.99^2}} \approx 70.71, &&&
 \kappa_{\{4\}}(A) &= 100 \sqrt{\frac{1^2 + .99^2}{1.99^2}} \approx 70.71, \\
 \kappa_{\{5\}}(A) &= \infty, &&& \kappa_{\{6\}}(A) &= \infty.
\end{align*}
\end{linenomath}
I dropped the superscript because the field ($\RR$ or $\CC$) does not matter by \cref{cor_field_extension}.
The last two condition numbers are $\infty$ because the $2$-dimensional cokernel $\langle \vect{e}_5, \vect{e}_6 \rangle$ is split across the selected and the non-selected singular values $\sigma_5=0$ and $\sigma_6=0$.

We can verify these numbers numerically as follows.
The worst-case perturbation for both $\mathrm{U}_{\sigma_3}$ and $\mathrm{U}_{\sigma_4}$ is, up to scale,
\begin{linenomath}
\begin{align*}
\dot{A} 
&= (\sigma_4 \vect{e}_3 \vect{e}_4^T - \sigma_3 \vect{e}_4 \vect{e}_3^T)  + 2\frac{\sigma_3\sigma_4}{\sigma_3^2 + \sigma_4^2} (\sigma_4 \vect{e}_4 \vect{e}_3^T - \sigma_3 \vect{e}_3 \vect{e}_4^T),
\end{align*}
\end{linenomath}
by \cref{thm_secondary}.
Adding $\epsilon = 10^{-5}$ times this perturbation to $A$ only affects the $2 \times 2$ submatrix formed by the third and fourth rows and columns of $A$. Restricting to it, we find numerically using Octave that the SVD of
\[
A'_\epsilon = \begin{bmatrix}
 1 & -9.9494975\cdot10^{-8} \\ 
 -1.00499975\cdot10^{-7} & 0.99
\end{bmatrix}
\]
is given approximately by
\begin{linenomath}
\begin{multline*}
A'_\epsilon =
\begin{bmatrix}
  9.999999999500002 \cdot 10^{-1} & 9.999999998500292 \cdot 10^{-6} \\
  -9.999999998500292 \cdot 10^{-6} & 9.999999999500001 \cdot 10^{-1}
\end{bmatrix}
\cdot 
\diag(1, 0.99)\\
\cdot
\begin{bmatrix}
9.999999999500051 \cdot 10^{-1} &  -9.999494973501580 \cdot 10^{-6} \\
9.999494973501580 \cdot 10^{-6} &   9.999999999500051 \cdot 10^{-1}
\end{bmatrix}.
\end{multline*}
\end{linenomath}
The chordal distance between $\vect{e}_3$ and $(0,0,u_{11},u_{21},0,0)$ is $9.999978209007872 \cdot 10^{-6}$, where $(u_{11},u_{21})$ is the first column of the matrix of left singular vectors in the above factorization of $A_\epsilon'$. As the Frobenius distance between $A$ and $A_\epsilon'$ is only $1.414195706930316 \cdot 10^{-7}$, their fraction is approximately $70.71 \approx \kappa_{\{3\}}(A)$.

Looking at all possible condition numbers with $\sharp\pi=2$, we have 
\begin{linenomath}
\begin{align*}
 \kappa_{\{1,2\}}(A) &= \kappa_{\{2\}}(A),  &&& \kappa_{\{1,3\}}(A) &= \kappa_{\{3\}}(A), &&& \kappa_{\{1,4\}}(A) = \kappa_{\{4\}}(A), \\
 \kappa_{\{2,3\}}(A) &= \kappa_{\{3\}}(A), &&& \kappa_{\{2,4\}}(A) &= \kappa_{\{4\}}(A), \\
 \kappa_{\{3,4\}}(A) &= 0.99^{-1} \approx 1.01, \\
 \kappa_{\{5,6\}}(A) &= 0.99^{-1} \approx 1.01,
\end{align*}
\end{linenomath}
while all other combinations, which split $\sigma_5=0$ and $\sigma_6=0$, are equal to $\infty$. Note that the singular subspace $\mathrm{U}_{\{\sigma_3,\sigma_4\}}=\langle \vect{e}_3,\vect{e}_4 \rangle$ is well conditioned with $\kappa_{\{3,4\}}(A) \approx 0.75$, while the individual singular subspaces $\mathrm{U}_{\sigma_3}=\langle \vect{e}_3\rangle$ and $\mathrm{U}_{\sigma_4}=\langle \vect{e}_4 \rangle$ are not.

It is important to stress that $\kappa_{\{5,6\}}(A)$ is \emph{not} the condition number of computing the cokernel $\mathrm{U}_{\sigma_5}=\mathrm{U}_0$ of $A$. Indeed, an infinitesimal perturbation of $A$ can cause the dimension of the cokernel to drop from $2$ to $1$, e.g., when adding $\epsilon \vect{e}_5 \vect{e}_5^T$ to $A$. This means that the codomain of a map that takes an arbitrary matrix $A$ to its cokernel would have to be the \emph{double-infinite Grassmannian} \cite{YL2016}. However, such a map is necessarily discontinuous when the dimension jumps, entailing a condition number equal to $\infty$ with any of the distances in \cite{YL2016}.

What $\kappa_{\{5,6\}}$ is measuring instead is how the left singular subspace associated with the $5$th and $6$th largest singular value, i.e., the space represented by the projector $\mathcal{L}_{\{5,6\}}(A) = \vect{e}_5 \vect{e}_5^T + \vect{e}_6 \vect{e}_6^T$, is (continuously) moving as $A$ is perturbed. Note that these perturbations may increase $\sigma_5$ from $0$ to a nonzero value. For example, adding $\epsilon \vect{e}_5 \vect{e}_5^T$ to $A$ increases $\sigma_5$ from $0$ to $\epsilon$. While this would constitute a worst perturbation of the cokernel, which changes to the one-dimensional space $\vect{e}_6 \vect{e}_6^T$, the projector associated with the $5$th and $6$th largest singular values is $\mathcal{L}_{\{5,6\}}(A + \epsilon \vect{e}_5 \vect{e}_5^T) = \vect{e}_5 \vect{e}_5^T + \vect{e}_6 \vect{e}_6^T$. That is, $\vect{e}_5 \vect{e}_5^T$ is a best infinitesimal perturbation direction in which the singular subspace remains constant.
On the other hand, the worst perturbation of $\mathcal{L}_{\{5,6\}}$ is given by \cref{thm_secondary}, which in this case is up to scale equal to
\(
 \dot{A} = \sigma_4 \vect{u}_\perp \vect{e}_4^T,
\)
where $\vect{u}_\perp$ is any element in the cokernel of $A$, i.e., $\vect{u}_\perp = \alpha \vect{e}_5 + \beta \vect{e}_6$ for some $\alpha,\beta\in\RR$. Taking, for example, $\alpha=1$ and $\beta=0$, and adding $\epsilon \dot{A}$ to $A$, we find that $\Vert A - (A+\epsilon\dot{A})\Vert_F = \epsilon \sigma_4$, while $\vect{e}_4$ is perturbed to
\[
 \frac{\vect{e}_4 + \epsilon \vect{u}_\perp}{\Vert \vect{e}_4 + \epsilon \vect{u}_\perp\Vert} = \frac{1}{\sqrt{1 + \epsilon^2}} (\vect{e}_4 + \epsilon \vect{u}_\perp).
\]
Consequently, 
\[
 \mathcal{L}_{\{5,6\}}(A+\epsilon\dot{A}) = I - \vect{e}_1 \vect{e}_1^T - \vect{e}_2 \vect{e}_2^T - \vect{e}_3 \vect{e}_3^T - \frac{1}{1+\epsilon^2} (\vect{e}_4 + \epsilon \vect{u}_\perp)(\vect{e}_4 + \epsilon \vect{u}_\perp)^T.
\]
Since the chordal distance is also equal to $\frac{1}{\sqrt{2}}$ times the Frobenius distance between the projectors, we have 
\begin{linenomath}
\begin{multline*}
 \frac{1}{\sqrt{2}} \Bigl\Vert I - \sum_{i=1}^3 \vect{e}_i \vect{e}_i^T - \frac{1}{1+\epsilon^2} (\vect{e}_4 + \epsilon \vect{u}_\perp)(\vect{e}_4 + \epsilon \vect{u}_\perp)^T - (\vect{e}_5 \vect{e}_5^T + \vect{e}_6 \vect{e}_6^T) \Bigr\Vert_F
 \\= \frac{1}{\sqrt{2}} \Bigl\Vert \vect{e}_4 \vect{e}_4^T - \frac{1}{1+\epsilon^2} (\vect{e}_4 + \epsilon \vect{u}_\perp)(\vect{e}_4 + \epsilon \vect{u}_\perp)^T \Bigr\Vert_F \approx 9.999999999500001 \cdot 10^{-6}.
\end{multline*}
\end{linenomath}
Dividing by $\epsilon \sigma_4= 0.99 \cdot 10^{-5}$ yields the accurate approximation $1.010101010050505$ of the condition number $\kappa_{\{5,6\}} =  0.99^{-1} \approx 1.0101\ldots$

\section{Proof of \cref{prop_only_left,prop_complementary}} \label{sec_auxiliary}
I prove the two lemmas that simplify the computation of the condition number. \Cref{prop_only_left} stated that the condition number of a right singular subspace is the condition number of the corresponding left singular subspace of $A^H$.

\begin{proof}[Proof of \cref{prop_only_left}]
The fact that the right singular subspace $\mathcal{R}^\kk_\rho(A)$ equals the left singular subspace $\mathcal{L}^\kk_\rho(A^H)$ for $A \in \mathcal{V}_\rho$ follows from the definitions. Since conjugate transposition defines an isometry between $(\kk^{m \times n}, d_{\kk^{m\times n}})$ and $(\kk^{n \times m}, d_{\kk^{n \times m}})$, and the condition number in \cref{eqn_rice_cond} is invariant under isometries of the domain, the proof is concluded.
\end{proof}

\Cref{prop_complementary} stated that the condition number of a subspace $\mathrm{U}_\pi$ is the same as that of its orthogonal complement $\mathrm{U}_\pi^\perp = \mathrm{U}_{\pi^c}$. This is proved next.

\begin{proof}[Proof of \cref{prop_complementary}]
When $P \in \Gr(k,\RR^m)$ is a projector, then we can define its complement projector as $P^\perp = I - P \in \Gr(m-k, \RR^m)$. It follows from the definition of the chordal distance that orthogonal complementation $\perp : \Gr(k,\RR^m)\to\Gr(m-k,\RR^m)$ defines an isometry in the chordal distance. As the condition number is invariant under isometries of its codomain, the proof is concluded.
\end{proof}

\section{Proof of \cref{thm_main}: The real case} \label{sec_proof_main_thm}
This section presents the proof of \cref{thm_main} for $\kk=\RR$.
At a glance, the proof proceeds as follows in \cref{sec_sub_assumption,sec_sub_pullback,sec_sub_smooth,sec_sub_simplifying,sec_sub_eliminating,sec_sub_condition,sec_sub_thm1_equal}:
\begin{enumerate}
 \item the results of \cref{sec_auxiliary} are exploited to simplify the $\pi$'s we need to consider;
 \item the condition number \cref{eqn_rice_cond} can be computed from the derivative of $\mathcal{L}_\pi^\RR$, which reduces determining the condition number to a spectral norm computation: a norm maximization subject to a norm constraint;
 \item an explicit smooth map that locally coincides with $\mathcal{L}_\pi^\RR$ is presented and its derivative is computed;
 \item the norms appearing in the spectral norm computation are simplified;
 \item most of the variables appearing in the optimization problem are eliminated;
 \item a closed solution of the reduced optimization problem is determined; and
 \item uniqueness of continuous extensions from dense subsets is exploited to conclude that the formula obtained in steps $1$--$6$ applies for all real matrices.
\end{enumerate}

\begin{remark}
Let $\pi$ be as in \cref{eqn_selection}.
Throughout steps $1$--$6$ (i.e., \cref{sec_sub_assumption,sec_sub_pullback,sec_sub_smooth,sec_sub_simplifying,sec_sub_eliminating,sec_sub_condition}), $A \in \mathcal{U}_\pi \subset \RR^{m\times n}$ is a real matrix of rank equal to $r=\min\{m,n\}$ whose singular values are distinct. Then, in the final step, a uniqueness of extensions argument shows that the obtained formula applies for all matrices.\end{remark}

\subsection{Simplifying assumptions}\label{sec_sub_assumption}
If $i\in\pi$ such that $r+1 \le i \le m$, then a vector from the cokernel of $A$ is selected. However, since $A \in \mathcal{U}_\pi$ this must imply $\{r+1,\ldots,m\} \subset \pi$; otherwise, $\mathrm{U}_\pi$ cannot be a left singular subspace. Therefore, either $\pi \subset \{1,\ldots,r\}$ or $\pi^c \subset \{1,\ldots,r\}$. By \cref{prop_complementary}, $\kappa[\mathcal{L}_\pi^\RR](A) = \kappa[\mathcal{L}_{\pi^c}^\RR](A)$, so we can assume without loss of generality that $\pi \subset \{1,\ldots,r\}$. In fact, by relabeling the nonzero, distinct singular values, we can further assume, without loss of generality, that $\pi = \{1, \ldots, k\}$, where $k \le r$, simplifying the notation.
I also define
\[
S = \begin{bmatrix} I_{k} \\ 0 \end{bmatrix} \begin{bmatrix} I_{k} & 0 \end{bmatrix}
\]
as the orthogonal projection matrix that selects the first $k$ coordinates and zeros out the other coordinates. It is idempotent ($S^2 = S$) and symmetric ($S=S^T$).

\subsection{From a manifold to the tangent space}\label{sec_sub_pullback}
The codomain of $\mathcal{L}_\pi^\RR$ from \cref{eqn_def_map} is the Grassmannian $\Gr(k,\RR^m)$, where $k=\sharp\pi$.
Viewed as the manifold of rank-$k$ orthogonal projectors, it is an embedded submanifold of $\RR^{m \times m}$ \cite{MS1985}. Locally at $P \in \Gr(k,\RR^m)$, the manifold can be identified with the \textit{tangent space} $\Tang_P \Gr(k,\RR^m)$, a linear subspace of $\RR^{m\times m}$ with origin at $P$ that consists of all derivatives of smooth curves in $\Gr(k,\RR^m)$ passing through $P$.
The tangent spaces $\Tang_P \Gr(k,\RR^m)$ can be equipped with the \textit{Riemannian metric}
\begin{linenomath}
\begin{align*}
g_P(\dot{P},\dot{Q}) := \frac{1}{2} \langle \dot{P}, \dot{Q} \rangle_F = \frac{1}{2} \mathrm{trace}(\dot{P}^T \dot{Q}), \quad\text{for all } \dot{P},\dot{Q} \in \Tang_P \Gr(k,\RR^m).
\end{align*}
\end{linenomath}
The structure $(\Gr(k,\RR^m),g)$ is called a \emph{Riemannian manifold} \cite{Petersen2006}.
The inner product $g_P$ induces the norm $\frac{1}{\sqrt{2}}\|\dot{P}\|_F = ( g_P(\dot{P},\dot{P}) )^{1/2}$, which, in turn, induces a \textit{Riemannian distance} on $\Gr(k,\RR^m)$ \cite{Petersen2006}.
Wong \cite{Wong1967} showed that $g$ induces the Grassmann distance $d_\Gr^G$ from \cref{eqn_distance}; see also \cite{EAS1998}.

The domain of $\mathcal{L}_\pi^\RR$ is the set $\mathcal{U}_\pi$.
The subset $\mathcal{U}\subset\RR^{m\times n}$ of full-rank matrices with distinct singular values is open; see, e.g., \cref{lem_zariski} in \cref{sec_sub_thm1_equal}. Moreover, the basic left singular subspaces of $A \in \mathcal{U}$, except for the cokernel $\mathrm{U}_0$, are all $1$-dimensional. Hence, with the above assumption that $\pi\subset\{1,\ldots,r\}$, it follows that $\mathcal{U}\subset\mathcal{U}_\pi$.
As the definition of condition in \cref{eqn_rice_cond} is local, we have $\kappa[\mathcal{L}_\pi^\RR](A) = \kappa[\mathcal{L}_\pi^\RR|_{\mathcal{U}}](A)$. Since $\mathcal{U} \subset \RR^{m\times n}$ is open, it is a smooth embedded Riemannian manifold inheriting the inner product $\langle\cdot,\cdot\rangle_F$ and distance $d_{\RR^{m\times n}}(A,A')$ from $\RR^{m\times n}$ \cite[Chapter 5]{Petersen2006}.

From the above information, it follows that the optimization problem \cref{eqn_rice_cond} is over Riemannian manifolds (i.e., nonlinear curved spaces). By the main result in \cite{Rice1966}, it can be pulled back to the tangent spaces (i.e., linear spaces).
Computing the condition number $\kappa[\mathcal{L}_\pi^\RR](A)$ thus reduces to the spectral norm computation
\begin{linenomath}
\begin{align}\label{eqn_simplified_condition}
 \kappa[\mathcal{L}_\pi^\RR](A)
 =\lim_{\epsilon\to0} \sup_{\substack{A \in \RR^{m\times n},\\ \|A-A'\|_F \le \epsilon}} \frac{d_{\Gr}^G(\mathcal{L}_\pi^\RR(A), \mathcal{L}_\pi^\RR(A'))}{\| A - A' \|_F}
 = \sup_{\dot{A}\in\RR^{m\times n}} \frac{ \tfrac{1}{\sqrt{2}}\| (\deriv_A \mathcal{L}_\pi^\RR)(\dot{A}) \|_F }{ \| \dot{A} \|_F },
\end{align}
\end{linenomath}
insofar as the \textit{derivative} of $\mathcal{L}_\pi^\RR$ \cite{Lee2013}, i.e.,
$\deriv_A \mathcal{L}_\pi^\RR : \RR^{m\times n} \to \Tang_{\mathcal{L}_\pi^\RR(A)} \Gr(k,\RR^m)$, exists.  The derivative of a differentiable map is a linear map between tangent spaces \cite{Lee2013}.

\subsection{Characterizing $\mathcal{L}_\pi^\RR$} \label{sec_sub_smooth}
Let $\mathrm{D}^r$ denote the subspace of real $r\times r$ diagonal matrices with strictly positive entries. Usually, $\mathcal{L}_\pi^\RR$ can be realized as the composition $\mathrm{P}_\pi \circ \Pi^{-1}$ of the smooth maps
\begin{linenomath}
\begin{align*}
 \Pi: \St_{m,r} \times \mathrm{D}^r \times \St_{n,r} \to \RR^{m \times n},\quad
 (U, \Sigma, V) \mapsto U\Sigma V^T,\\
 \mathrm{P}_\pi : \St_{m,r} \times \mathrm{D}^r \times \St_{n,r} \to \Gr(k,\RR^m),\quad
 (U, \Sigma, V) \mapsto U S U^T,
\end{align*}
\end{linenomath}
where $\St_{m,r} = \{ U \in \RR^{m \times r} \mid U^T U = I_r \}$ is the smooth manifold of $m\times r$ matrices with orthonormal columns \cite{Lee2013,EAS1998}.
Recall that $(U,\Sigma,V)$ is a \textit{compact SVD} of $A = U \Sigma  V^T$ if $U\in\St_{m,r}$, $\Sigma\in\mathrm{D}^r$, $V\in\St_{n,r}$.
The fiber of $\Pi$ over a rank-$r$ matrix $A$ is thus the set of compact SVDs of $A$.
The map $\mathrm{P}_\pi$ takes a compact SVD and maps it to the subspace spanned by the columns of $U$ at the positions in $\pi$.
The fiber of $\Pi$ at $A \in \mathcal{U}_\pi$ composed with $\mathrm{P}_\pi$ results in a unique point on the Grassmannian; see also the definition of $\mathcal{U}_\pi$ in \cref{sec_results}.
Consequently, the composition $\mathrm{P}_\pi \circ \Pi^{-1}$ is well defined on $\mathcal{U}_\pi$, even though $\Pi$ is never injective.

Let $A = U \Sigma  V^T$ be a compact SVD. The derivative of $\Pi$ at $(U,\Sigma ,V)$ is
\begin{linenomath}
\begin{align*}
\deriv_{(U,\Sigma ,V)} \Pi : 
\Tang_{U} \St_{m,r} \times \Tang_\Sigma  \mathrm{D}^r \times \Tang_{V} \St_{n,r} &\to \Tang_A\RR^{m \times n},\\
(\dot{U}, \dot{\Sigma }, \dot{V}) &\mapsto \dot{U} \Sigma  V^T + U \dot{\Sigma } V^T + U \Sigma  \dot{V}^T,
\end{align*}
\end{linenomath}
where $\Tang_x X$ is the tangent space at $x\in X$ of a manifold $X$.
We can decompose
\begin{linenomath}
 \begin{align}  \label{eqn_proof_decomposition}
\dot{U} = U \dot{\Lambda}_U + U_\perp \dot{X} \quad\text{and}\quad \dot{V} = V \dot{\Lambda}_V + V_\perp \dot{Y},
 \end{align}
\end{linenomath}
where $\dot{\Lambda}_U, \dot{\Lambda}_V \in \mathrm{Skew}_r$ are $r \times r$ skew-symmetric matrices, $U_\perp$ and $V_\perp$ contain orthonormal bases of the orthogonal complements of the column spans of $U$ and $V$, respectively, and $\dot{X} \in \RR^{m-r \times r}$ and $\dot{Y} \in \RR^{n-r \times r}$; see, e.g., \cite{EAS1998}.
\begin{lemma} \label{lem_injective}
$\deriv_{(U,\Sigma ,V)} \Pi$ is injective if all $\sigma_i$ in $\Sigma =\mathrm{diag}(\sigma_1,\ldots,\sigma_r)\in\mathrm{D}^r$ are distinct.
\end{lemma}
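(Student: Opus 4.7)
The plan is to substitute the decomposition \cref{eqn_proof_decomposition} into the expression for $\deriv_{(U,\Sigma,V)} \Pi$, change basis to ``diagonalize'' the target $\RR^{m \times n}$, and then read off a block system whose vanishing forces each tangent component to be zero. Since the domain of $\deriv_{(U,\Sigma,V)} \Pi$ and the image space $\Tang_A\RR^{m\times n}$ both have dimension $mn$ after accounting for the skew-symmetry of $\dot\Lambda_U, \dot\Lambda_V$, a dimension count shows that injectivity and surjectivity are equivalent here; I will directly prove injectivity.

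Concretely, suppose $\deriv_{(U,\Sigma,V)} \Pi(\dot U,\dot\Sigma,\dot V) = 0$. Substituting \cref{eqn_proof_decomposition} and using $\dot\Lambda_V^T = -\dot\Lambda_V$ gives
\[
 U\dot\Lambda_U \Sigma V^T + U_\perp \dot X \Sigma V^T + U \dot\Sigma V^T - U\Sigma \dot\Lambda_V V^T + U\Sigma \dot Y^T V_\perp^T = 0.
\]
Left-multiplying by $[U\;\; U_\perp]^T$ and right-multiplying by $[V\;\; V_\perp]$ turns this into the block equation
\[
 \begin{bmatrix} \dot\Lambda_U \Sigma - \Sigma \dot\Lambda_V + \dot\Sigma & \Sigma \dot Y^T \\ \dot X \Sigma & 0 \end{bmatrix} = 0,
\]
thanks to the orthonormality of $[U\;\;U_\perp]$ and $[V\;\;V_\perp]$. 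Since $\Sigma \in \mathrm{D}^r$ is invertible, the off-diagonal blocks immediately yield $\dot X = 0$ and $\dot Y = 0$.

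The remaining block $\dot\Lambda_U \Sigma - \Sigma \dot\Lambda_V + \dot\Sigma = 0$ is handled by separating diagonal and off-diagonal entries. On the diagonal, the skew-symmetry of $\dot\Lambda_U$ and $\dot\Lambda_V$ makes the first two terms vanish, so $\dot\Sigma = 0$. For $i \ne j$, the $(i,j)$ and $(j,i)$ entries produce the $2\times 2$ linear system
\[
 \begin{bmatrix} \sigma_j & -\sigma_i \\ -\sigma_i & \sigma_j \end{bmatrix} \begin{bmatrix} (\dot\Lambda_U)_{ij} \\ (\dot\Lambda_V)_{ij} \end{bmatrix} = 0
\]
whose determinant is $\sigma_j^2-\sigma_i^2$. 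This is where the distinctness hypothesis on the $\sigma_i$'s enters---and this is the only step that genuinely requires it. Under that assumption the system is nonsingular, forcing $(\dot\Lambda_U)_{ij} = (\dot\Lambda_V)_{ij} = 0$ for all $i \neq j$, hence $\dot\Lambda_U = \dot\Lambda_V = 0$. Combined with $\dot X = \dot Y = 0$ and $\dot\Sigma = 0$, we conclude $(\dot U,\dot\Sigma,\dot V) = 0$, proving injectivity. I expect no serious obstacle beyond carefully tracking the change of basis; the only subtle point is the $2\times 2$ coupling between $\dot\Lambda_U$ and $\dot\Lambda_V$, which cleanly isolates the role of the distinct singular values.
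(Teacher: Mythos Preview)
Your proof is correct and follows essentially the same route as the paper: block-decompose via $[U\;U_\perp]$ and $[V\;V_\perp]$, read off $\dot X=\dot Y=0$ and $\dot\Sigma=0$, and then use distinctness of the $\sigma_i$ to kill the skew-symmetric parts. The only cosmetic difference is that the paper argues the last step by noting $\Sigma^{-1}\dot\Lambda_U\Sigma=\dot\Lambda_V$ must be skew-symmetric, whereas you package the $(i,j)$ and $(j,i)$ entries as a $2\times2$ system with determinant $\sigma_j^2-\sigma_i^2$; these are the same computation.
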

\begin{proof}
Suppose there is some $(\dot{U},\dot{\Sigma },\dot{V})$ in the kernel. Then, we must have 
\[
\deriv_{(U,\Sigma ,V)} \Pi (\dot{U},\dot{\Sigma },\dot{V}) = 
 \begin{bmatrix} U & U_\perp \end{bmatrix} \begin{bmatrix} \dot{\Lambda}_U \Sigma  + \dot{\Sigma } - \Sigma  \dot{\Lambda}_V & \Sigma \dot{Y}^T \\ \dot{X}\Sigma  & 0 \end{bmatrix} \begin{bmatrix} V & V_\perp \end{bmatrix}^T = 0.
\]
Hence, $\dot{X} = 0$, $\dot{Y} = 0$, and $\dot{\Lambda}_U \Sigma  + \dot{\Sigma } - \Sigma  \dot{\Lambda}_V = 0$. The last equation is equivalent to $\Sigma ^{-1} \dot{\Lambda}_U \Sigma  = \dot{\Lambda}_V$ and $\dot{\Sigma }=0$ because the diagonal of skew-symmetric matrices is zero. Now, $\Sigma ^{-1} \dot{\Lambda}_U \Sigma$ should be a skew-symmetric matrix because $\dot{\Lambda}_V$ is. This requires $(\Sigma ^{-1} \dot{\Lambda}_U \Sigma)_{ij} = \sigma_i^{-1} \dot{\lambda}_{ij} \sigma_j = - \sigma_j^{-1} \dot{\lambda}_{ji} \sigma_i = -(\Sigma ^{-1} \dot{\Lambda}_U \Sigma)_{ji}$, or, equivalently, $\sigma_j^2 \dot{\lambda}_{ij} = \sigma_i^2 \dot{\lambda}_{ij}$, having used that $\dot{\Lambda}_U$ is skew symmetric. As all $\sigma_i$ are distinct and nonzero, this implies $\dot{\lambda}_{ij}=0$ for all $i,j$. Hence the kernel is trivial.
\end{proof}

Since the dimensions of domain and codomain of $\Pi$ are equal, it follows from this lemma that $\Pi$ is a local diffeomorphism \cite[Proposition 4.8]{Lee2013}. Since the elements in the fiber of $\Pi$ are isolated over $\mathcal{U}$, it follows that $\Pi$ restricts to a \textit{diffeomorphism} (a bijective smooth map with smooth inverse map) $\Pi_{(U,\Sigma,V)}^{-1}$ between an open neighborhood $\mathcal{Y}$ of $(U,\Sigma,V)$ in $\St_{m,r} \times \mathrm{D}^r \times \St_{n,r}$ and an open neighborhood $\mathcal{X}$ of $A=U\Sigma V^T \in \mathcal{U}_\pi$. Hence, $\Pi_{(U,\Sigma,V)}^{-1}$ is a well-defined inverse map of $\Pi$, tracking a specific compact SVD in the fiber of $\Pi$. Hence, $\mathcal{L}_\pi^\RR\vert_\mathcal{X} = \mathrm{P}_\pi \circ \Pi_{(U,\Sigma,V)}^{-1}$ in the open submanifold $\mathcal{X}$. To determine the condition number, equality of $\mathcal{L}_\pi^\RR$ and $\mathrm{P}_\pi \circ \Pi_{(U,\Sigma,V)}^{-1}$ on $\mathcal{X}$ suffices.

\subsection{Simplifying norms} \label{sec_sub_simplifying}
As $\deriv_{(U,\Sigma ,V)}{\Pi}$ is injective, we have, by the inverse function theorem for manifolds \cite{Lee2013}, that
\[
 (\deriv_{U\Sigma V^T}{\Pi_{{(U,\Sigma,V)}}^{-1}})(\dot{U} \Sigma  V^T + U \dot{\Sigma } V^T + U \Sigma  \dot{V}^T) = (\dot{U},\dot{\Sigma },\dot{V}).
\]
This can be composed with the derivative of $\mathrm{P}_\pi$:
\begin{linenomath}
\begin{align*}
 \deriv_{(U,\Sigma,V)} \mathrm{P}_\pi : \Tang_U \St_{m,r} \times \Tang_\Sigma  \mathrm{D}^r \times \Tang_V \St_{n,r} &\to \Tang_{U S U^T} \Gr_{m,k}\\ 
 (\dot{U},\dot{\Sigma },\dot{V}) &\mapsto \dot{U} S U^T + U S \dot{U}^T.
\end{align*}
\end{linenomath}
Plugging $\mathcal{L}_\pi^\RR|_\mathcal{X} = \mathrm{P}_\pi \circ \Pi_{(U,\Sigma,V)}^{-1}$ into \cref{eqn_simplified_condition} yields
\begin{linenomath}
\begin{align}\label{eqn_cond_sup_def}
\kappa[\mathcal{L}_\pi^\RR](A) 
\nonumber&= \sup_{\substack{\dot{A} \in \Tang_{A}\mathcal{X},\\ \Vert\dot{A}\Vert_F=1}} \frac{1}{\sqrt{2}} \left\Vert (\deriv_{({U},{\Sigma },{V})}{\mathrm{P}_\pi} ) \left( ( \deriv_{A}{\Pi_{(U,\Sigma,V)}^{-1}}) (\dot{A}) \right) \right\Vert_F\\
&= \sup_{\substack{\dot{U}\in\Tang_{U}{\mathrm{St}_{m,r}},\, \dot{\Sigma}\in\Tang_\Sigma \mathrm{D}^r,\, \dot{V}\in\Tang_{V}\St_{n,r},\\ \Vert\dot{U}\Sigma V^T + U \dot{\Sigma} V^T + U \Sigma  \dot{V}^T \Vert_F = 1}} \frac{1}{\sqrt{2}} \Vert \dot{U} S U^T + U S \dot{U}^T \Vert_F.
\end{align}
\end{linenomath}

Next, we simplify both norms in \cref{eqn_cond_sup_def}. Consider \cref{eqn_proof_decomposition} and let us partition 
\begin{equation*}
\dot{\Lambda}_U = \begin{bmatrix} \dot{B} & -\dot{E}^T \\ \dot{E} & \dot{C} \end{bmatrix}
 \quad\text{ and }\quad
 \dot{\Lambda}_V = \begin{bmatrix} \dot{B}' & -\dot{E}'^{T} \\ \dot{E}' & \dot{C}' \end{bmatrix},
\end{equation*}
where $\dot{B}, \dot{B}' \in \mathrm{Skew}_{k}$ and $\dot{C}, \dot{C}' \in \mathrm{Skew}_{r-k}$ are skew symmetric matrices, and $\dot{E}, \dot{E}' \in \RR^{r-k \times k}$ are arbitrary.
Then, on the one hand, we have
\begin{linenomath}
\begin{align}
\nonumber \Vert \dot{U} S U^T + U S \dot{U}^T \Vert_F^2
\nonumber&= \Vert U_\perp \dot{X} S U^T \Vert_F^2 + \Vert U (\dot{\Lambda}_U S + S \dot{\Lambda}_U^T) U^T \Vert_F^2 + \Vert U S \dot{X}^T U_\perp^T \Vert_F^2\\
\nonumber&= 2 \Vert \dot{X} S \Vert_F^2 + \Vert \dot{\Lambda}_U S - S \dot{\Lambda}_U \Vert_F^2 \\
\nonumber&= 2 \Vert \dot{X} S \Vert_F^2 + \left\Vert \begin{bmatrix} \dot{B} & 0 \\ \dot{E} & 0 \end{bmatrix} - \begin{bmatrix} \dot{B} & -\dot{E}^T \\ 0 & 0 \end{bmatrix} \right\Vert_F^2 \\
\label{eqn_norm1} &= 2 \Vert \dot{X} S \Vert_F^2 + 2 \Vert\dot{E}\Vert_F^2.
\end{align}
\end{linenomath}
On the other hand, we obtain 
\begin{linenomath}
\begin{align}
\nonumber\Vert\dot{U}\Sigma V^T& + U \dot{\Sigma } V^T + U \Sigma  \dot{V}^T \Vert_F^2\\
\nonumber&= \Vert U_\perp \dot{X} \Sigma V^T \Vert_F^2 + \Vert U (\dot{\Lambda}_U \Sigma + \dot{\Sigma } + \Sigma  \dot{\Lambda}_V^T) V^T \Vert_F^2 + \Vert U \Sigma  \dot{Y}^T V_\perp^T \Vert_F^2 \\
\label{eqn_exp_norm21}&= \Vert \dot{X} \Sigma  \Vert_F^2 + \Vert \dot{Y} \Sigma  \Vert_F^2 + \Vert \dot{\Sigma } \Vert_F^2 + \Vert \dot{\Lambda}_U \Sigma  - \Sigma  \dot{\Lambda}_V \Vert_F^2,
\end{align}
\end{linenomath}
where in the last step we used that $\dot{\Sigma }$ is a diagonal matrix, while the diagonal of a skew-symmetric matrix is zero, so that their inner product is zero. 
By partitioning $\Sigma = \left[\begin{smallmatrix}\Sigma_1 & 0 \\ 0 & \Sigma_2 \end{smallmatrix}\right]$ with $\Sigma_1 \in \mathrm{D}^k$, we find
\begin{linenomath}
\begin{multline}\label{eqn_exp_norm22}
 \Vert \dot{\Lambda}_U \Sigma  - \Sigma  \dot{\Lambda}_V \Vert_F^2 
 =\\ \Vert \dot{B} \Sigma_1 - \Sigma_1 \dot{B}' \Vert_F^2 + \Vert \dot{C} \Sigma_2 - \Sigma_2 \dot{C}' \Vert_F^2 + \Vert \dot{E} \Sigma_1 - \Sigma_2 \dot{E}' \Vert_F^2 + \Vert \Sigma_2 \dot{E} - \dot{E}' \Sigma_1 \Vert_F^2.
\end{multline}
\end{linenomath}

Plugging \cref{eqn_exp_norm21,eqn_exp_norm22,eqn_norm1} into \cref{eqn_cond_sup_def}, we obtain for $A=U\Sigma V^T$ the following optimization problem over Euclidean spaces:
\begin{linenomath}
\begin{align}\label{eqn_sup_complicated}
\kappa[\mathcal{L}_\pi^\RR](A) 
= \sup_{
\substack{
\dot{X}\in\RR^{m-r\times r},\; \dot{Y}\in\RR^{n-r\times r},\\
\dot{E}, \dot{E}'\in\RR^{r-k \times k},\\
\dot{B}, \dot{B}'\in \Skew_{k},\; \dot{C}, \dot{C}' \in \Skew_{r-k},\\
\dot{\Sigma} \in \Tang_{\Sigma} \mathrm{D}^r,\\
\Vert\dot{X}\Sigma \Vert_F^2 + \Vert\dot{Y}\Sigma \Vert_F^2 + \Vert\dot{\Sigma}\Vert_F^2 + \Vert \dot{B} \Sigma_1 - \Sigma_1 \dot{B}' \Vert_F^2 \\
+ \Vert \dot{C} \Sigma_2 - \Sigma_2 \dot{C}' \Vert_F^2 + \Vert \dot{E}\Sigma_1 - \Sigma_2 \dot{E}' \Vert_F^2 + \Vert \Sigma_2 \dot{E} - \dot{E}' \Sigma _1 \Vert_F^2 = 1}} 
\sqrt{\Vert \dot{X} S \Vert_F^2 + \Vert \dot{E} \Vert_F^2}.
\end{align}
\end{linenomath}

\subsection{Eliminating variables} \label{sec_sub_eliminating}
Next, we need a lemma, which states that variables that do not feature in the objective in \cref{eqn_sup_complicated} can be eliminated.
\begin{lemma}
Let $\mathrm{W}_j$, $j=1,\ldots,J$, be linear spaces.
Let $L : \mathrm{W}_1 \to \RR^{m_1\times n_1}$ be an invertible linear map and $M : \mathrm{W}_1 \to \RR^{m_1' \times n_1'}$ a nonzero linear map. Let $f_j : \mathrm{W}_2\times\cdots\times \mathrm{W}_J \to \RR^{m_j \times n_j}$ and $g_j: \mathrm{W}_2\times\cdots\times \mathrm{W}_H \to \RR^{m_j' \times n_j'}$ be continuous maps. Let $1 \le H < J$. Then,
\begin{linenomath}
 \begin{multline*}
  \sup_{\substack{X_j \in \mathrm{W}_j,\; j=1,\ldots,J,\\ \Vert L(X_1)\Vert_F^2 + \sum_{j=1}^u \Vert f_j(X_2,\ldots,X_J)\Vert_F^2=1}} \Vert M(X_1) \Vert_F^2 + \sum_{j=1}^v \Vert g_j(X_2,\ldots,X_H) \Vert_F^2\\
  = \sup_{\substack{X_j \in \mathrm{W}_j,\; j=1,\ldots,H,\\ \Vert L(X_1)\Vert_F^2 + \sum_{j=1}^u \Vert f_j(X_2,\ldots,X_H,X_{H+1}^*,\ldots,X_J^*)\Vert_F^2=1}} \hspace{-8pt} \Vert M(X_1) \Vert_F^2 + \sum_{j=1}^v \Vert g_j(X_2,\ldots,X_H) \Vert_F^2,
 \end{multline*}
 \end{linenomath}
where $X_{H+1}^*(X_1,\ldots,X_H), \ldots, X_{J}^*(X_1,\ldots,X_H)$ solves
\begin{linenomath}
\begin{align}\label{eqn_minnorm}
 \min_{\substack{X_j \in \mathrm{W}_j,\\ j=H+1,\ldots,J}}\; \sum_{j=1}^u \Vert f_j(X_2,\ldots,X_H,X_{H+1},\ldots,X_J) \Vert_F^2
\end{align}
\end{linenomath}
for fixed $X_1,\ldots,X_H$, if such minimizers exist.
 \end{lemma}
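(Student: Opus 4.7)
The plan is to prove equality of the two suprema by establishing each inequality separately. The easier direction, reduced $\le$ original, is immediate: given any tuple $(X_1, X_2, \ldots, X_H)$ feasible for the reduced problem, appending $X_{H+1} := X_{H+1}^*(X_1,\ldots,X_H)$, $\ldots$, $X_J := X_J^*(X_1,\ldots,X_H)$ produces a tuple feasible for the original problem with \emph{identical} objective value, because the appended coordinates do not enter the objective at all. Taking suprema then gives reduced $\le$ original.

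For the reverse inequality, reduced $\ge$ original, the key idea is a scaling argument that exploits two structural facts: (i) $X_1 \mapsto \|L(X_1)\|_F^2$ and $X_1 \mapsto \|M(X_1)\|_F^2$ are homogeneous of degree $2$ because $L$ and $M$ are linear, so their ratio is scale invariant, and (ii) $X_{H+1},\ldots,X_J$ enter only the constraint, and only through $\sum_{j=1}^u\|f_j\|_F^2$, which is minimized at $(X_{H+1}^*,\ldots,X_J^*)$. Given any tuple $(X_1,\ldots,X_J)$ feasible for the original problem, introduce
\[
 c := \sum_{j=1}^u \|f_j(X_2,\ldots,X_H,X_{H+1}^*,\ldots,X_J^*)\|_F^2 \le c' := \sum_{j=1}^u \|f_j(X_2,\ldots,X_J)\|_F^2,
\]
where the inequality comes from the defining property of the minimizers. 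The original constraint forces $\|L(X_1)\|_F^2 = 1-c' \le 1-c$, so replacing the tail of the tuple by the minimizers loosens the budget for $L(X_1)$. When $X_1 \ne 0$, set $\lambda := \sqrt{(1-c)/\|L(X_1)\|_F^2} \ge 1$ and $Y_1 := \lambda X_1$; then $(Y_1, X_2, \ldots, X_H)$ is feasible for the reduced problem, and its objective $\lambda^2\|M(X_1)\|_F^2 + \sum_{j=1}^v\|g_j(X_2,\ldots,X_H)\|_F^2$ dominates the original objective at $(X_1,\ldots,X_J)$ by homogeneity.

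The only subtlety is the degenerate case $X_1 = 0$, which by invertibility of $L$ is equivalent to $\|L(X_1)\|_F = 0$; then $c' = 1$ and $\|M(X_1)\|_F = 0$, so the original objective reduces to $\sum_{j=1}^v \|g_j(X_2,\ldots,X_H)\|_F^2$. If $c = 1$, the reduced constraint forces $Y_1 = 0$ and the two objectives agree; if $c < 1$, any $Y_1$ with $\|L(Y_1)\|_F^2 = 1-c$, which exists by invertibility of $L$, yields a reduced-feasible tuple whose objective is at least $\sum_{j=1}^v \|g_j(X_2,\ldots,X_H)\|_F^2$. Taking suprema then closes the inequality. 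I do not expect a serious obstacle here: the invertibility of $L$ is used exactly twice, to identify $\|L(X_1)\|_F = 0$ with $X_1 = 0$ and to realize prescribed values of $\|L(Y_1)\|_F^2$, while the degree-$2$ homogeneity of both $L$ and $M$ drives the scaling step; the main care is just in separating the degenerate case cleanly from the generic one.
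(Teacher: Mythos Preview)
Your proof is correct and follows essentially the same strategy as the paper's: take a feasible point of the original problem, replace $X_{H+1},\ldots,X_J$ by the minimizers of \cref{eqn_minnorm} (which lowers the constraint sum), then adjust $X_1$ to restore feasibility without decreasing the objective. The only cosmetic difference is the case split: you distinguish $X_1=0$ versus $X_1\ne 0$, whereas the paper splits on $M(X_1)=0$ versus $M(X_1)\ne 0$; your split is arguably cleaner since the scaling $Y_1=\lambda X_1$ already handles the case $M(X_1)=0$ with $X_1\ne0$ without appealing to the hypothesis that $M$ is nonzero.
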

\begin{proof}
 Let $X_1, \ldots, X_J$ be any feasible point of the constraint set 
 \[
 \mathcal{C}= \left\{ X_j \in \mathrm{W}_j,\; j=1,\ldots,J \;\mid\; \Vert L(X_1)\Vert_F^2 + \sum_{j=1}^u \Vert f_j(X_2,\ldots,X_J)\Vert_F^2=1 \right\}.
 \]
If $X_{H+1},\ldots,X_J$ is not a solution of \cref{eqn_minnorm}, then there exist some $X_{j}' \in \mathrm{W}_j$ for $j=H+1,\ldots,J$ such that
\[
0 \le \Vert L(X_1) \Vert_F^2 + \sum_{j=1}^u \Vert f_j(X_2,\ldots,X_H,X_{H+1}',\ldots,X_J') \Vert_F^2 = \alpha < 1.
\]
We distinguish between two cases.

If $M(X_1)=0$, then we can replace $X_1$ by some $X_1' \not\in \ker(M)$ so that $\Vert L(X_1')\Vert_F^2 = 1-\alpha+\Vert L(X_1)\Vert_F^2$ and $\Vert M(X_1')\Vert_F^2 > 0$. Then $(X_1',X_2,\ldots,X_H,X_{H+1}',\ldots,X_J')\in\mathcal{C}$ and this feasible point has a strictly higher objective value.

Otherwise, $M(X_1) \ne 0$ (so $X_1\ne0$). Let $X_1' = \sqrt{1 + \frac{1-\alpha}{\Vert L(X_1)\Vert_F^2}} X_1$. Then, the point $(X_1',X_2,\ldots,X_H,X_{H+1}',\ldots,X_{J}')\in\mathcal{C}$ is feasible and has a strictly higher objective value because $M$ is linear.

Hence, every feasible point in $\mathcal{C}$ can be replaced by another feasible point in $\mathcal{C}$ wherein $X_{H+1}$, $\ldots$, $X_J$ solves \cref{eqn_minnorm} and whose objective value is not strictly smaller. This concludes the proof.
\end{proof}

Applying this lemma to \cref{eqn_sup_complicated}, treating $\dot{X}$ as the distinguished variable $X_1$, we can eliminate $\dot{Y}$, $\dot{B}$, $\dot{B}'$, $\dot{C}$, $\dot{C}'$, $\dot{\Sigma }$, and $\dot{E}'$. By the lemma, we should plug in their optimal values instead, corresponding to the minimum of 
\begin{linenomath}\small
\begin{multline*}
 \Vert\dot{Y}\Sigma \Vert_F^2 + \Vert\dot{\Sigma}\Vert_F^2 
 + \Vert \dot{B} \Sigma_1 - \Sigma_1 \dot{B}' \Vert_F^2 + \Vert \dot{C} \Sigma_2 - \Sigma_2 \dot{C}' \Vert_F^2 + \Vert \dot{E}\Sigma_1 - \Sigma_2 \dot{E}' \Vert_F^2 + \Vert \Sigma_2 \dot{E} - \dot{E}' \Sigma_1 \Vert_F^2.
\end{multline*}
\end{linenomath}
Because of the sum of squares structure, it is clear that we can take $\dot{Y}^*$, $\dot{B}^*$, $\dot{B}'^*$, $\dot{C}^*$, $\dot{C}'^*$, and $\dot{\Sigma }^*$ all equal to zero in a minimum, irrespective of the values of the other variables. Thus, it only remains to determine, for fixed $\dot{E}$, the solution of 
\[
 \min_{\dot{E}'\in\RR^{r-k\times k}} \Vert \dot{E}\Sigma_1 - \Sigma_2 \dot{E}' \Vert_F^2 + \Vert \Sigma_2 \dot{E} - \dot{E}' \Sigma_1 \Vert_F^2.
\]
By exploiting the basic property $\mathrm{vec}(AXB^T) = (A \otimes B)(\mathrm{vec}(X))$, where the vectorization $\mathrm{vec}$ is in the lexicographic order \cite{Lim2021}, we can rewrite the norms as
\begin{equation}\label{eqn_finalnorms}
 \Vert \dot{E} \Sigma_1 - \Sigma_2 \dot{E}' \Vert_F^2 + \Vert \Sigma_2 \dot{E} - \dot{E}' \Sigma_1 \Vert_F^2 = \left\Vert \begin{bmatrix} I \otimes \Sigma_1 \\ \Sigma_2 \otimes I \end{bmatrix} \dot{\vect{e}} - \begin{bmatrix} \Sigma_2 \otimes I \\ I \otimes \Sigma_1 \end{bmatrix} \dot{\vect{f}} \right\Vert_F^2,
\end{equation}
where $\dot{\vect{e}}=\mathrm{vec}(\dot{E})$ and $\dot{\vect{f}}=\mathrm{vec}(\dot{E}')$.
Minimizing the right hand side, for fixed $\dot{E}$, is a linear least squares problem in $\dot{E}'$. The minimizer is
\begin{linenomath}
\begin{align*}
\dot{\vect{f}} =
 \begin{bmatrix} \Sigma_2 \otimes I \\ I \otimes \Sigma_1 \end{bmatrix}^\dagger \begin{bmatrix} I \otimes \Sigma_1 \\ \Sigma_2 \otimes I \end{bmatrix} \dot{\vect{e}}
 &= (\underbrace{\Sigma_2^2 \otimes I + I \otimes \Sigma_1^2}_T)^{-1} \begin{bmatrix} \Sigma _2 \otimes I \\ I \otimes \Sigma _1 \end{bmatrix}^T \begin{bmatrix} I \otimes \Sigma _1 \\ \Sigma _2 \otimes I \end{bmatrix} \dot{\vect{e}} \\
 &= 2 T^{-1} (\Sigma_2 \otimes \Sigma_1) \dot{\vect{e}} \\
 &= 2 (\Sigma_2 \otimes \Sigma_1) T^{-1} \dot{\vect{e}}.
\end{align*}
\end{linenomath}
In the above, $(A\otimes B)(C \otimes D) = (AC) \otimes (BD)$ for compatible matrices was exploited \cite{Greub1978}. The final step used that diagonal matrices commute.
Observe that
\begin{linenomath}
\begin{align*}
D_1 := (I\otimes \Sigma_1) - 2 (\Sigma_2\otimes I)(\Sigma_2\otimes\Sigma_1)T^{-1}
&= (I \otimes \Sigma_1)(I\otimes \Sigma_1^2 - \Sigma_2^2\otimes I)T^{-1},\\
D_2 := (\Sigma_2 \otimes I) - 2 (I\otimes\Sigma_1)(\Sigma_2\otimes \Sigma_1)T^{-1}
&= (\Sigma_2 \otimes I)(\Sigma_2^2\otimes I - I\otimes\Sigma_1^2) T^{-1},
\end{align*}
\end{linenomath}
so that
\[
 D^2 := D_1^2+D_2^2
 = (\Sigma_2^2 \otimes I - I \otimes \Sigma_1^2)^2 (\Sigma_2^2 \otimes I + I\otimes\Sigma_1^2) T^{-2}
 = (\Sigma_2^2 \otimes I - I \otimes \Sigma_1^2)^2 T^{-1}.
\]
Exploiting that $\left\Vert\left[\begin{smallmatrix}D_1 \\ D_2 \end{smallmatrix}\right]\vect{x} \right\Vert = \Vert \sqrt{D_1^2 + D_2^2} \vect{x} \Vert$ for all diagonal matrices $D_1, D_2$ and vectors $\vect{x}$, we deduce from the foregoing that \cref{eqn_finalnorms} reduces to
\(
 \Vert D \dot{\vect{e}} \Vert_F^2.
\)
In conclusion, if $\dot{E}'(\dot{E})$ is selected optimally, then we have at $A=U\Sigma V^T$ that
\[
 \kappa[\mathcal{L}_\pi^\RR](A) 
 = \sup_{\substack{\dot{X}\in\RR^{m-r\times r},\, \dot{E}\in\RR^{r-k \times k},\\ \Vert\dot{X}\Sigma \Vert_F^2 + \Vert D \mathrm{vec}(\dot{E}) \Vert^2 = 1}}
 \sqrt{\Vert \dot{X} S \Vert_F^2 + \Vert \mathrm{vec}(\dot{E}) \Vert^2}.
\]

\subsection{Computing the condition number} \label{sec_sub_condition}
Since both $\Sigma $ and $D$ are invertible, we
can reparameterize $\dot{X} \mapsto (I \otimes \Sigma^{-1}) \mathrm{vec}(\dot{X})$ and $\mathrm{vec}(\dot{E}) \mapsto D^{-1} \mathrm{vec}(\dot{E})$. This yields
\begin{linenomath}
\begin{align*}
 \kappa[\mathcal{L}_\pi^\RR](A) 
&= \sup_{\substack{(\dot{\vect{x}},\dot{\vect{e}})\in\RR^{d},\; \Vert(\dot{\vect{x}},\dot{\vect{e}})\Vert=1}} \left\Vert \begin{bmatrix} I_{m-r} \otimes S \Sigma^{-1} & 0 \\ 0 &  D^{-1} \end{bmatrix} \begin{bmatrix}\dot{\vect{x}}\\ \dot{\vect{e}} \end{bmatrix} \right\Vert,
\end{align*}
\end{linenomath}
where $d = (m-r)r + (r-k)k$.
Since  $D^{-1} = \sqrt{ \Sigma_2^2\otimes I + I \otimes \Sigma_1^2} \cdot |\Sigma_2^2\otimes I - I\otimes \Sigma_1^2|^{-1}$, where the root and absolute value are applied elementwise to the diagonal entries, and $I_{m-r}\otimes S \Sigma^{-1}$ are diagonal, the spectral norm in the final equality is
\begin{linenomath}
\begin{align}\label{eqn_kappa}
 \kappa[\mathcal{L}_\pi^\RR](A) = \max\left\{ \kappa_{U_\perp},\, \kappa_U \right\},
 \end{align}
 \end{linenomath}
where 
\begin{linenomath}
\begin{align*}
 \kappa_{U_\perp} = 
 \begin{cases} 
 \underset{i \in \pi}{\max}\, \sigma_{i}^{-1} &\text{if } m > r = n\\
 0 & \text{otherwise},
 \end{cases} 
\;\text{and}\;
\quad
\kappa_{U} = \max_{\substack{i \in \{1,\ldots,k\},\\ j \in \{k+1,\ldots,r\}}} \frac{1}{\vert\sigma_i-\sigma_j\vert} \sqrt{\frac{\sigma_i^2 +\sigma_j^2}{(\sigma_i + \sigma_j)^2}} 
\end{align*}
\end{linenomath}
are the spectral norms of $I_{m-r}\otimes S \Sigma^{-1}$ and $D^{-1}$, respectively. 
Observe that characterization in \cref{eqn_kappa} is equivalent to \cref{eqn_cond_expression}, taking into account that $\pi=\{1,\ldots,k\}$ and $\sigma_{r+1}=\cdots=\sigma_m=0$. 
When the maximization is over an empty set, i.e., $\pi=\emptyset$, then $\mathcal{L}_\pi^\RR = 0$ is a constant map, so its condition number is $0$. 
This proves \cref{thm_main} for full-rank real matrices with distinct singular values.

\subsection{Continuous extension to all real matrices} \label{sec_sub_thm1_equal}
The proof of the real case of \cref{thm_main} will be concluded by the uniqueness properties of continuous extensions.

Recall that $\hat{f} : \hat{X} \to \RR\cup\{\pm\infty\}$ is called a \textit{continuous extension} of a continuous function $f : X \to \RR$ to $\hat{X}$ if $\hat{f}$ is continuous and $\hat{f}|_X = f$, where $X \subset \hat{X} \subset \RR^d$. The value of any continuous extension of $f$, \emph{if it exists}, at $x_* \in (\hat{X}\cap\overline{X})\setminus X$, where $\overline{X}$ is the Euclidean closure of $X$, is the unique limit of $f(x_n)$ for (Cauchy) sequences $x_n \in X$ converging to $x_*$ (otherwise continuity of the extension would fail at $x_*$). If $\hat{X} \subset \overline{X}$, then this completely defines $\hat{f}$ pointwise. This proves the following result.

\begin{lemma}\label{lem_context_unique}
 If $\hat{X} \subset \overline{X}$ and there exists a continuous extension of $f : X \to \RR$ to $\hat{X}$, then this extension is unique.
\end{lemma}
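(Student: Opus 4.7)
The plan is to show that any two continuous extensions of $f$ must agree pointwise on $\hat{X}$, so I would fix an arbitrary point $x_* \in \hat{X}$ and argue that its image under \emph{any} continuous extension is forced. For $x_* \in X$ this is immediate, since by the definition of an extension every continuous extension $\hat{f}$ satisfies $\hat{f}(x_*) = f(x_*)$. So the only real work is at points $x_* \in \hat{X}\setminus X$.

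For such $x_*$, the hypothesis $\hat{X} \subset \overline{X}$ guarantees $x_* \in \overline{X}$, so first countability of the Euclidean topology on $\RR^d$ furnishes a sequence $x_n \in X$ with $x_n \to x_*$. Then I would invoke sequential continuity of any extension $\hat{f} : \hat{X} \to \RR \cup \{\pm\infty\}$ at $x_*$ to conclude
\[
\hat{f}(x_*) = \lim_{n\to\infty} \hat{f}(x_n) = \lim_{n\to\infty} f(x_n),
\]
where the second equality uses $\hat{f}|_X = f$. The right-hand side depends only on $f$ and the sequence, not on which extension was chosen. Hence if $\hat{f}_1$ and $\hat{f}_2$ are two continuous extensions, both must evaluate to the same limit at $x_*$, and therefore $\hat{f}_1 \equiv \hat{f}_2$ on all of $\hat{X}$.

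The one step that deserves attention is justifying sequential continuity for maps into the extended real line $\RR \cup \{\pm\infty\}$; this is standard because the extended real line is metrizable (for instance via the homeomorphism with $[-1,1]$ through $\arctan$) and because $\hat{X} \subset \RR^d$ is first countable, so continuity and sequential continuity coincide. Apart from that, the argument is routine: once the existence of an approximating sequence is secured and continuity is applied, the value at each boundary point is pinned down, which is exactly the uniqueness claim.
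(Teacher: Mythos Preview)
Your proof is correct and follows essentially the same approach as the paper: both argue that for $x_* \in \hat{X}\setminus X \subset \overline{X}$, any continuous extension must send $x_*$ to $\lim_{n\to\infty} f(x_n)$ for a sequence $x_n \in X$ converging to $x_*$, which pins down the value uniquely. Your version is slightly more explicit in handling the $x_* \in X$ case and in justifying sequential continuity into the extended real line, but the core idea is identical.
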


We need two further basic results.

\begin{lemma}
$\kappa_\pi^\RR : \mathcal{U}_\pi \to \RR$ from \cref{eqn_cond_expression} is a continuous function.
\end{lemma}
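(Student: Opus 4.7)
The plan is to express $\kappa_\pi^\RR$ as the pointwise maximum of finitely many continuous functions on $\mathcal{U}_\pi$. Writing \cref{eqn_cond_expression} as
\[
\kappa_\pi^\RR(A) = \max_{i \in \pi,\, j \in \pi^c} f_{ij}(A), \quad\text{where}\quad f_{ij}(A) := \frac{1}{|\sigma_i(A)-\sigma_j(A)|}\sqrt{\frac{\sigma_i(A)^2+\sigma_j(A)^2}{(\sigma_i(A)+\sigma_j(A))^2}},
\]
it suffices to prove that each $f_{ij}$ is continuous on $\mathcal{U}_\pi$, because the pointwise maximum of finitely many continuous functions is continuous.

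Two ingredients are needed. First, I would invoke the classical fact that each ordered singular value $\sigma_i : \RR^{m\times n} \to \RR_{\ge 0}$ is a globally Lipschitz continuous function of $A$, an immediate consequence of Weyl-type perturbation inequalities together with the min-max characterization of singular values. Second, I must show that on $\mathcal{U}_\pi$ the denominators appearing in $f_{ij}$ are strictly positive for every $(i,j) \in \pi \times \pi^c$. Concretely, I would argue: if $A \in \mathcal{U}_\pi$ and there were indices $i \in \pi$, $j \in \pi^c$ with $\sigma_i(A) = \sigma_j(A)$, then the multiplicity group of this common singular value (understanding the cokernel as the multiplicity group associated with the value $0$, in line with the convention $\sigma_{r+1} = \cdots = \sigma_m = 0$) would be split between $\pi$ and $\pi^c$, directly contradicting the defining property of $\mathcal{U}_\pi$ recalled in \cref{sec_results}. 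Hence $|\sigma_i(A) - \sigma_j(A)| > 0$ throughout $\mathcal{U}_\pi$, and since $\sigma_i, \sigma_j \ge 0$ with at least one strict, also $\sigma_i(A) + \sigma_j(A) > 0$.

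Combining the two ingredients, the map $A \mapsto (\sigma_i(A), \sigma_j(A))$ is continuous into the open subset $\{(x,y) \in \RR_{\ge 0}^2 : x \neq y\}$ of $\RR^2$, on which the rational expression $(x,y) \mapsto |x-y|^{-1}\sqrt{(x^2+y^2)/(x+y)^2}$ defining $f_{ij}$ is smooth. Composition yields continuity of $f_{ij}$ on $\mathcal{U}_\pi$, and taking the maximum over the finite set $\pi \times \pi^c$ concludes the proof. The only subtle point I anticipate is careful bookkeeping of the index convention (specifically, treating the padded zeros as a single multiplicity group) when converting the defining condition of $\mathcal{U}_\pi$ into the separation statement $\sigma_i \neq \sigma_j$ for $i \in \pi$, $j \in \pi^c$; this is really just an unpacking of definitions rather than a genuine obstacle.
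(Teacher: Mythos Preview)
Your proposal is correct and follows essentially the same approach as the paper: both arguments reduce to the continuity of the ordered singular values $\sigma_i$ and then conclude that the explicit expression \cref{eqn_cond_expression} is a continuous function of these. The paper obtains continuity of $\sigma_i$ via the eigenvalue functions of $A^T A$ or $A A^T$ and Kato's result on eigenvalue continuity, whereas you invoke Weyl-type perturbation bounds; these are interchangeable here. Your treatment is in fact more explicit than the paper's on one point: you spell out why the denominators $|\sigma_i-\sigma_j|$ and $\sigma_i+\sigma_j$ do not vanish on $\mathcal{U}_\pi$ (the paper's short proof leaves this implicit, relying on the characterization of $\mathcal{U}_\pi$ given earlier in \cref{sec_results}).
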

\begin{proof}
The individual singular value functions are
\[
\sigma_i : \kk^{m\times n} \to \RR,\, X \mapsto
\begin{cases}
\sqrt{\lambda_i(X^T X)} & \text{if } m \ge n, \\
\sqrt{\lambda_i(X X^T)} & \text{otherwise},
\end{cases}
\quad\text{for } i=1,\ldots,\min\{m,n\},
\]
where the eigenvalue functions $\lambda_i$ take a symmetric matrix to its $i$th largest eigenvalue. Since the $\lambda_i$ are continuous functions \cite[Chapter 2, Section 6.4]{Kato1976}, the $\sigma_i$'s are continuous. The constants $\sigma_{r+1}=\dots=\sigma_m=0$, if any, are also continuous functions on $\mathcal{U}_\pi$.
\end{proof}

\begin{lemma}
$\kappa[\mathcal{L}_\pi^\RR] : \mathcal{U}_\pi \to \RR$ is a continuous function.
\end{lemma}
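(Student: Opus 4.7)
The plan is to establish smoothness of $\mathcal{L}_\pi^\RR$ on $\mathcal{U}_\pi$ and then invoke the continuity of the operator norm. Although the previous sections derived the condition number formula only on the dense open subset $\mathcal{U} \subset \mathcal{U}_\pi$ of full-rank matrices with distinct singular values, the map $\mathcal{L}_\pi^\RR$ itself is smooth on all of $\mathcal{U}_\pi$, and this alone suffices for continuity of its condition number.

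First I would note that $\mathcal{U}_\pi$ is open in $\RR^{m\times n}$: each multiplicity class of singular values of $A \in \mathcal{U}_\pi$ lies entirely in $\pi$ or entirely in $\pi^c$, and sufficiently small perturbations can only split such classes, preserving compatibility with $\pi$. The main step would then be to establish smoothness of $\mathcal{L}_\pi^\RR$ via the Riesz--Dunford contour integral representation of the spectral projector. For $A \in \mathcal{U}_\pi$ there is a strict gap between the eigenvalues $\{\sigma_i^2 : i \in \pi\}$ of $AA^T$ and the remaining eigenvalues. Choosing a contour $\Gamma \subset \CC$ that encloses only the former,
\[
\mathcal{L}_\pi^\RR(A) = \frac{1}{2\pi i} \oint_\Gamma (\lambda I - AA^T)^{-1}\, \deriv\lambda,
\]
and by continuity of the eigenvalues of $AA^T$ in $A$, the same $\Gamma$ continues to separate the spectrum in a neighborhood of $A$, while the integrand depends smoothly (in fact real-analytically) on $A$ there.

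Consequently, the derivative $\deriv_A \mathcal{L}_\pi^\RR : \RR^{m \times n} \to \RR^{m \times m}$ depends continuously on $A \in \mathcal{U}_\pi$ in the operator norm induced by the Frobenius norm. Since $\mathcal{L}_\pi^\RR$ is differentiable on the open set $\mathcal{U}_\pi$, the identity \cref{eqn_simplified_condition} applies throughout, giving $\kappa[\mathcal{L}_\pi^\RR](A) = \tfrac{1}{\sqrt{2}} \|\deriv_A \mathcal{L}_\pi^\RR\|_{\mathrm{op}}$. Because the operator norm is itself a continuous function on the space of linear maps, continuity of $A \mapsto \kappa[\mathcal{L}_\pi^\RR](A)$ on $\mathcal{U}_\pi$ follows. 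The main obstacle is justifying the smoothness of $\mathcal{L}_\pi^\RR$ via the contour integral representation; once that is in hand, continuity of the condition number is purely formal.
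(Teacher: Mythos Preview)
Your proposal is correct and takes essentially the same approach as the paper. The paper's proof simply cites Kato's perturbation theory for the analyticity of $\mathcal{L}_\pi^\RR$ on $\mathcal{U}_\pi$ and then invokes \cref{eqn_simplified_condition} together with continuity of the spectral norm; your argument unpacks that citation via the Riesz--Dunford contour integral (which is precisely Kato's method for proving analyticity of spectral projectors) and then applies the identical spectral-norm step.
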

\begin{proof}
$\mathcal{L}_\pi^\RR : \mathcal{U}_\pi \to \RR$ is an analytic function \cite[Theorem 6.1 in Chapter 2]{Kato1976}. By equation \cref{eqn_simplified_condition} and the continuity of the spectral norm, this concludes the proof.
\end{proof}

Consider the set $\mathcal{U} \subset \RR^{m \times n}$ of full-rank matrices with distinct singular values. Then, the proof of \cref{thm_main} up to this point has established that 
\[
 \kappa[\mathcal{L}_\pi^\RR]|_{\mathcal{U}} = \kappa_\pi^\RR|_\mathcal{U} := \kappa.
\]
This means that both $\kappa[\mathcal{L}_\pi^\RR]$ and $\kappa_\pi^\RR$ are continuous extensions of $\kappa$. By combining \cref{lem_context_unique} with the next elementary result from algebraic geometry, which implies that the closure of $\mathcal{U}$ is $\RR^{m\times n}$, equality of $\kappa[\mathcal{L}_\pi^\RR]$ and $\kappa_\pi^\RR$ on $\mathcal{U}_\pi$ is established.

\begin{lemma}\label{lem_zariski}
 $\mathcal{U}$ is a nonempty open subset in the Zariski topology on $\RR^{m\times n}$.
\end{lemma}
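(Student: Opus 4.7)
The plan is to exhibit a matrix in $\mathcal{U}$ for non-emptiness, and then to express the complement $\mathcal{U}^c \subset \RR^{m\times n}$ as a finite union of zero loci of polynomials in the entries of $A$, from which Zariski openness follows. For non-emptiness, set $r := \min\{m,n\}$ and take the pseudodiagonal matrix $A_0$ with $(A_0)_{ii} = i$ for $i = 1,\ldots,r$ and zeros elsewhere: it has rank $r$ and pairwise distinct positive singular values $1, 2, \ldots, r$, so $A_0 \in \mathcal{U}$.

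For openness, I would decompose $\mathcal{U}^c = \mathcal{R} \cup \mathcal{D}$, where $\mathcal{R} = \{A : \rank A < r\}$ is the classical determinantal variety cut out by the vanishing of all $r\times r$ minors of $A$, each of which is a polynomial in the matrix entries. The second piece $\mathcal{D}$ consists of the matrices with at least one repeated singular value. The key observation is that the squared singular values of $A$ are precisely the eigenvalues of the Gram matrix $G := A^T A$ (or $A A^T$ if $m < n$), so a repeated singular value is equivalent to the characteristic polynomial $p_A(t) := \det(tI - G)$ having a repeated root, which in turn is equivalent to the vanishing of the discriminant of $p_A$. This discriminant is a universal polynomial in the coefficients of $p_A$, and those coefficients are polynomials in the entries of $G$, hence of $A$. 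Therefore $\mathcal{D}$ is Zariski closed, and $\mathcal{U}^c = \mathcal{R} \cup \mathcal{D}$ is Zariski closed as a finite union of Zariski closed sets.

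No step constitutes a serious obstacle: the argument assembles three standard facts, namely that rank drop is cut out by minors, that the discriminant of a univariate polynomial is a polynomial in its coefficients, and that the characteristic polynomial of $A^T A$ has polynomial coefficients in the entries of $A$. The only mild subtlety is recognizing that the discriminant, rather than some ad hoc combination of minors of adjoint-type matrices, is the single polynomial whose vanishing cleanly detects eigenvalue coincidence; once this is in hand everything is routine.
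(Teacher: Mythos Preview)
Your proposal is correct and follows essentially the same approach as the paper: both detect coinciding singular values via the discriminant of the characteristic polynomial of the Gram matrix $A^T A$ (or $AA^T$), and both detect rank deficiency by a polynomial condition---you use the vanishing of all $r\times r$ minors, while the paper uses the determinant of the Gram matrix, which by Cauchy--Binet is the sum of squares of those minors and hence vanishes over $\RR$ precisely when they all do. You additionally supply an explicit witness for non-emptiness, which the paper omits.
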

\begin{proof}
A matrix $A\in\RR^{m\times n}$ has coinciding singular values if and only if the characteristic polynomial $\chi$ of $A^T A \in \RR^{n \times n}$ (if $m \ge n$) or $AA^T \in \RR^{m\times m}$ (if $m \le n$) has at least a double root. This occurs if and only if the discriminant of $\chi$ vanishes \cite[Chapter 12, Section 1.B]{GKZ1994}. Similarly, $A$ has a singular value equal to zero if and only if the determinant of $A^T A$ (if $m \ge n$) or $AA^T$ (if $m \le n$) vanishes.
Hence, $A \not\in \mathcal{U}$ if and only if the determinant times discriminant vanishes.
\end{proof}

For $A \not\in \mathcal{U}_\pi$, the condition number is $\kappa[\mathcal{L}_\pi^\RR] = \infty$ by definition. The formula for $\kappa_\pi^\RR$ in \cref{eqn_cond_expression} is well defined on $\RR^{m\times n}$ and also evaluates to $\infty$ because (i) \cref{rem_bounded} bounds the second factor in \cref{eqn_cond_expression} by constants and (ii) $A\not\in\mathcal{U}_\pi$ requires that there is some $i\in\pi$ and $j\in\pi^c$ such that $\sigma_i=\sigma_j$, where $\sigma_{r+1}=\cdots=\sigma_{m}=0$. Hence, $\kappa_\pi^\RR$ in \cref{eqn_cond_expression} coincides with $\kappa[\mathcal{L}_\pi^\RR]$ on the whole $\RR^{m\times n}$.

This concludes the proof of \cref{thm_main} in the real case.

\section{Proof of \cref{thm_main}: The complex case}
\label{sec_sub_thm1_complex}

The proof of \cref{thm_main,thm_secondary} for $\kk=\CC$ is considerably shorter and relies on the restriction of scalars
\begin{linenomath}
\begin{align*}
 \jmath: \CC^{m\times n} \to \RR^{2m\times 2n},\quad
 A + \imath B \mapsto \begin{bmatrix} A & B \\ -B & A \end{bmatrix},
\end{align*}
\end{linenomath}
where $A, B \in\RR^{m\times n}$ are real matrices, to establish a tight upper bound on $\kappa[\mathcal{L}_\pi^\CC]$ using the real condition number.
Note that $\jmath$ satisfies
\[
\jmath(XY) = \jmath(X)\jmath(Y);\quad
\jmath(X+Y) = \jmath(X)+\jmath(Y);\quad
\jmath(X^H) = \jmath(X)^T
\]
for compatible complex matrices $X$ and $Y$.
For square matrices, we additionally have that a real matrix $R = R + \imath 0$ is mapped to the block diagonal matrix $\jmath(R) = \diag(R,R)$. In particular we have $\jmath(I_m)=I_{2m}$.

If $X\in\CC^{m\times n}$ admits a compact SVD $X = U \Sigma V^H$ where $U$ and $V$ are matrices with unitary columns (i.e., $U^H U = I$ and $V^H V = I$) and $\Sigma$ is a diagonal matrix, then we have that
\(
\jmath(U \Sigma V^H) = \jmath(U) \jmath(\Sigma) \jmath(V)^T
\)
is a \textit{real} compact SVD, up to ordering of the singular values.
Indeed, if $U \in \CC^{m \times n}$ has unitary columns, then $\jmath(U)\in \RR^{2m\times 2n}$ has orthonormal columns because from $U^H U = I$, we conclude
\(
\jmath(U)^T \jmath(U) = \jmath(U^H U) = I.
\)
The multiplicity of each distinct singular value of $A$ is doubled in $\jmath(A)$, however.

If we equip $\CC^{m\times n}$ with the scaled Frobenius norm $\sqrt{2}\|\cdot\|_F$ and $\RR^{m\times n}$ with the usual Frobenius norm $\|\cdot\|_F$, then $\jmath$ is an isometry onto its image. Consequently, we get for $A, A' \in \mathcal{U}_\pi$, with $\pi$ as in \cref{thm_main}, that
\[
\frac{\sqrt{2}\frac{1}{\sqrt{2}}\Vert \mathcal{L}_\pi^\CC(A) - \mathcal{L}_\pi^\CC(A') \Vert_F}{\sqrt{2} \Vert A - A' \Vert_F}
 =\frac{\frac{1}{\sqrt{2}}\Vert \jmath(U)\jmath(S)\jmath(U)^T - \jmath(U') \jmath(S) \jmath(U')^T \Vert_F}{\Vert \jmath(A) - \jmath(A')\Vert_F}.
\]
Taking the limit superior over $A'$ in an infinitesimal neighborhood of $A$ in $\mathcal{U}_\pi$ yields
\[
\kappa[\mathcal{L}_{\pi}^\CC](A) =
\lim_{\epsilon\to0} \hspace{-3pt} \sup_{\substack{\jmath(A')\in\jmath(\mathcal{U}_\pi),\\ \Vert \jmath(A)-\jmath(A')\Vert_F\le\epsilon}} \hspace{-10pt}
 \frac{\Vert \jmath(U)\jmath(S)\jmath(U)^T - \jmath(U') \jmath(S) \jmath(U')^T \Vert_F}{\sqrt{2} \Vert \jmath(A) - \jmath(A')\Vert_F} \\
\le \kappa[\mathcal{L}_{\pi^2}^\RR](\jmath(A)),
\]
where $\pi^2 := (2\pi_1-1, 2\pi_1,2\pi_2-1, 2\pi_2,\ldots,2\pi_k-1, 2\pi_k)$, which originates from $\jmath(S)=\left[\begin{smallmatrix}S \\ & S \end{smallmatrix}\right]$ and assumes the singular values of $\jmath(A)$ were also sorted decreasingly. 
The upper bound exploited that the perturbations of $\mathcal{L}_\pi^\CC$ are restricted to the linear subspace $\jmath(\CC^{m\times n}) \subsetneq \RR^{2m\times 2n}$, while the real condition number at $\jmath(A)$ allows arbitrary perturbations in $\RR^{2m\times 2n}$.
Since \cref{thm_main} was already proved in the real case, $\kappa[\mathcal{L}_{\pi^2}^\RR](\jmath(A))$ is given by the formula \cref{eqn_cond_expression}, with $\pi^2$ as selected singular values. In fact, by exploiting the structure of $\pi^2$, which selects the same singular values as $\pi$ including the duplicate that is introduced by $\jmath$, it can be verified that 
\begin{equation*}
\kappa[\mathcal{L}_{\pi}^\CC](A)\le
\kappa[\mathcal{L}_{\pi^2}^\RR](\jmath(A)) = \max_{\substack{i\in\pi,\\ j\in\pi^c}} \frac{1}{\vert \sigma_i(A) - \sigma_j(A) \vert} \sqrt{\frac{\sigma_i^2(A) + \sigma_j^2(A)}{(\sigma_i(A)+\sigma_j(A))^2}},
\end{equation*}
where $\sigma_i(A)$ is the $i$th singular value of $A$ and $\pi^c = \{1,\ldots,m\}\setminus\pi$.
The right-hand side is exactly \cref{eqn_cond_expression} for $A\in\CC^{m\times n}$.

\Cref{sec_sub_worstdirection} will prove that there exists a perturbation attaining the foregoing upper bound.
This will conclude the proof of \cref{thm_main} for $\kk=\CC$.

\section{Proof of \cref{thm_secondary}}\label{sec_sub_worstdirection}
Let $A=U\Sigma V^H \in \kk^{m\times n}$ be any full SVD of a rank-$r$ matrix with singular values $\sigma_1 \ge \cdots \ge \sigma_r > \sigma_{r+1} = \cdots = \sigma_m = 0$.
Let $i$ and $j$ be as in the statement of \cref{thm_secondary} and consider the $m\times m$ and $n\times n$ matrices
\begin{linenomath}
\begin{equation*}\label{eqn_worst_inside}
\dot{U} = U (\vect{e}_j \vect{e}_i^T - \vect{e}_i \vect{e}_j^T)
\quad\text{ and }\quad
\dot{V} = 
\begin{cases} 
2 \frac{\sigma_i \sigma_j}{\sigma_i^2 + \sigma_j^2} V(\vect{e}_j \vect{e}_i^T - \vect{e}_i \vect{e}_j^T), & \text{if } 1 \le i,j \le r,\\
0, & \text{otherwise}.
\end{cases}
\end{equation*}
\end{linenomath}
Recall from \cite{EAS1998,Manton2002} that $\dot{U}$ is a tangent vector to the group
\[
\mathcal{O}_m^\kk =\{ U \in \kk^{m\times m} \mid U^H U = U U^H = I \}
\]
of orthogonal ($\kk=\RR$) or unitary ($\kk=\CC$) matrices. The analogous statement holds for $\dot{V}$.
In particular, we have
\[
 U_t = U + t \dot{U} + o(t) \in \mathcal{O}_{m}^\kk \quad\text{and}\quad
 V_t = V + t \dot{V} + o(t) \in \mathcal{O}_{n}^\kk,
\]
for sufficiently small $t > 0$. Then, we have
\[
A_t 
:= U_t \Sigma V_t^H 
= (U + t\dot{U})\Sigma (V + t\dot{V})^H + o(t) 
= A + t \dot{A} + o(t),
\]
where $\dot{A}$ is as in the statement of \cref{thm_secondary}.

On the one hand, we have
\begin{linenomath}
\begin{align}
 d_{\Gr}^c( \mathcal{L}_\pi^\kk(A), \mathcal{L}_\pi^\kk(A_t))
 \nonumber&= \frac{1}{\sqrt{2}} \| U S U^H - U_t S U_t^H \|_F \\
 \nonumber&= \frac{t}{\sqrt{2}} \| (\vect{e}_j \vect{e}_i^T - \vect{e}_i \vect{e}_j^T) S + S (\vect{e}_j \vect{e}_i^T - \vect{e}_i \vect{e}_j^T)^T \|_F + o(t) \\
 \label{eqn_worst_distgr}&= t + o(t),
\end{align}
\end{linenomath}
having used in the final step that $S$ is a projector for which $S \vect{e}_i = \vect{e}_i$ and $S \vect{e}_j = 0 \vect{e}_j$.
On the other hand, if $1\le i,j\le r$, then we find after some basic computations that essentially repeat the end of \cref{sec_sub_eliminating}, that
\begin{linenomath}
\begin{align}
 \| A_t - A \|_F
 \nonumber&= \| U_t \Sigma V_t^H - U \Sigma V^H \|_F \\
 \nonumber&= t\left\| (\vect{e}_j \vect{e}_i^T - \vect{e}_i \vect{e}_j^T) \Sigma + 2 \frac{\sigma_i \sigma_j}{\sigma_i^2 + \sigma_j^2} \Sigma  (\vect{e}_i \vect{e}_j^T - \vect{e}_j \vect{e}_i^T) \right\|_F + o(t) \\
 \label{eqn_worst_disteucl}&= t |\sigma_i-\sigma_j| \sqrt{\frac{(\sigma_i+\sigma_j)^2}{\sigma_i^2+\sigma_j^2}} + o(t).
\end{align}
\end{linenomath}
If $i > r$, then $\Vert A_t - A \Vert_F = t \Vert (\vect{e}_j \vect{e}_i^T - \vect{e}_i \vect{e}_j^T) \Sigma \Vert_F + o(t) = t \sigma_j + o(t)$, having used that $\sigma_i = 0$.
Note that $\sigma_j = |\sigma_i-\sigma_j| |\sigma_i+\sigma_j| (\sigma_i^2+\sigma_j^2)^{-1/2}$ in this case as well, even when $\sigma_j=0$ because of \cref{rem_bounded}.
The analogous result holds for $j > r$.
This shows that \cref{eqn_worst_disteucl} applies for all $1 \le i, j \le m$.

Plugging \cref{eqn_worst_distgr,eqn_worst_disteucl} into the middle formula in \cref{eqn_simplified_condition} results in \cref{eqn_cond_expression}.
This proves \cref{thm_secondary}.

\providecommand{\bysame}{\leavevmode\hbox to3em{\hrulefill}\thinspace}
\providecommand{\MR}{\relax\ifhmode\unskip\space\fi MR }
\providecommand{\MRhref}[2]{%
  \href{http://www.ams.org/mathscinet-getitem?mr=#1}{#2}
}
\providecommand{\href}[2]{#2}

\end{document}